\def\dim{n}
\def\dimt{3}
\def\R{{\mathbb R}}
\def\Rd{\R^\dim}
\def\Rdt{\R^3}
\def\Z{{\mathbb Z}}
\def\N{{\mathbb N}}
\def\div{{\rm div}}
\def\dps{\displaystyle}
\def\pt{\partial}
\renewcommand{\vec}[1]{{{#1}}}
\def\x{{\vec{x}}}
\def\y{{\vec{y}}}
\def\U{{\vec{U}}}
\def\vphi{{\vec{\phi}}}
\def\vtheta{{\vec{\theta}}}
\def\rhr{\langle x\rangle}
\def\rhry{\langle y\rangle}
\def\D{\mathscr{D}}
\def\Dp{\D'}
\def\S{\mathscr{S}}
\def\vv{{\vec{v}}}
\def\w{{\vec{w}}}
\def\z{{\vec{z}}}
\def\e{{\vec{e}}}
\def\f{{\vec{f}}}
\def\u{{\vec{u}}}
\def\vv{{\vec{v}}}
\def\vphi{{\vec{\phi}}}
\def\zero{{\vec{0}}}
\def\h{{\vec{h}}}
\def\grad{{\nabla}}
\def\bve{\;|\;}
\def\vvom{\vec{\omega}}
\def\uom{\widehat{\vvom}}
\def\rhr{\langle x \rangle}
\def\fleche{\longrightarrow}
\def\om{\Omega}
\def\<{\langle}
\def\>{\rangle}
\def\text{\mbox}
\def\div{\mathrm{div}\,}
\def\dps{\displaystyle}
\def\R{{\mathbb R}}
\def\Z{{\mathbb Z}}
\def\N{{\mathbb N}}
\def\S{{\mathbb S}}
\def\D{{\mathscr D}}
\def\Dp{{\mathscr D}^{\prime}}
\def\RR{ {\mathscr  R}}
\def\C{{\mathscr C}}
\def\pt{\partial}
\def\om{\Omega}
\def\<{\langle}
\def\>{\rangle}
\def\inclus{{\hookrightarrow}}
\def\grad{\nabla}
\def\rhr{\langle x\rangle}
\def\Rd{\R^n}
\def\bve{\,|\,}
\def\RR{{\mathscr R}}
\def\Rw{\vec{\RR}}
\def\VV{V}
\def\F{f}
\def\FF{F}
\def\eps{\varepsilon}
\def\Ball{{\mathscr B}}
\def\ps{p^\star}
\def\dimm{3}
\def\Rdt{\R^3}
\def\BB{{\mathscr B}}
\def\talpha{\tilde{\alpha}}
\renewcommand{\leq}{\leqslant}
\renewcommand{\geq}{\geqslant}
\title{On  Navier-Stokes equations arising from the rotation of an obstacle in a fluid}
\author[T. Z. Boulmezaoud]{Tahar Z. BOULMEZAOUD$^{1}$}
\address{\rm  $^1$ Universit\'e Paris-Saclay, UVSQ, LMV, Versailles, France.}
\email{tahar.boulmezaoud@uvsq.fr}
\author[N.  Kerdid]{Nabil KERDID$^{3}$}
\address{\rm  $^3$ College of Sciences, Imam University, Riyadh, Kingdom of Saudi Arabia. }
\email{{\tt nabil\_kerdid@yahoo.fr}}
\author[A.  Kourta]{Amel  KOURTA$^{4}$}
\address{\rm  $^4$ Departement of Mathematics, University Mentouri, Constantine, Algeria.}
\email{\tt a.kourta@yahoo.fr}
\newtheorem{theorem}{Theorem}[section]
\newtheorem{proposition}[theorem]{Proposition}
\newtheorem{corollary}[theorem]{Corollary}
\newtheorem{lemma}[theorem]{Lemma}
\newtheorem{remark}[theorem]{Remark}
\keywords{Navier-Stokes equations, rotating body, weighted  Sobolev spaces, unbounded domains} 
\subjclass{35Q30, 35Q35, 76D03, 76D07}
\begin{document}

\begin{abstract}
We consider the modified Navier–Stokes equations in $\R^3$ describing the motion of a fluid in the presence of a rotating rigid body. 
Weighted Sobolev spaces are used to describe the behavior of solutions at large distances. Under suitable assumptions, w
e prove the existence and regularity of solutions satisfying appropriate conditions at infinity.
 \end{abstract}

\let\MakeUppercase\relax 
\maketitle

\section{Introduction}
Consider a rigid body $\BB$, modeled as a closed and bounded subset of $\R^3$, moving within an incompressible viscous fluid that occupies the entire space. 
By adopting a reference frame attached to the body, the motion of the fluid is governed by the following equations:
\begin{equation}\label{modifiedNS0}
\begin{array}{rcl}
-\nu\Delta \U  + {\U.\grad \U}.  +  {\vv_\infty}.\grad \U + (\vvom  \times \x).\grad \U -  \vvom \times \U
+\nabla P &=& \F \mbox{ in } \om, \\
\div \U&=&0  \mbox{ in } \om, 
\end{array}
\end{equation}
where $\om = \Rdt \backslash \BB$,  $\U$ and $P$ denote  respectively the fluid velocity  and the pressure, $\vv_\infty$ is the prescribed translational velocity, and  $\vvom$ denotes the
 angular velocity. This system is often supplemented with a boundary condition of the form
$$
\U = \U_\star \mbox{ on } \pt \om. 
$$
When $\vv_\infty = \zero$ and $\vvom  = \zero$, the system reduces to the classical steady Navier-Stokes system. When  $\vv_\infty \ne \zero$ and $\vvom  = \zero$, neglecting the nonlinear term yields the Oseen system.  \\
Over the past two decades, considerable attention has been devoted to the analysis of system (\ref{modifiedNS0}), with particular emphasis on the asymptotic behavior of its solutions. In \cite{silvestre2004}, the author proves the existence of solutions $(\U, P) \in H^2_{loc}(\om) \times  H^1_{loc}(\om)$ such that $\grad u \in H^1(\om)^3$ and $\grad P \in L^2(\om)^3$. In \cite{galdi2003}, system  (\ref{modifiedNS0}) is investigated under the
 assumption that the external force   $\F$ is of the form $\F = \div \FF$ and satisfies the following estimates at large distances: 
$$
 |\F(\x)|  \lesssim \frac{1}{|\x|^3},  \;  |\div \F(\x)|  \lesssim \frac{1}{|\x|^4} \mbox{ and }   |\FF(\x)| \lesssim \frac{1}{|\x|^2}. 
$$
In this case, the author establishes the existence of a solution  $(\u, p)$ satisfying the asymptotic estimates:
$$
 |\u(\x)| \lesssim \frac{1}{|\x|}, \;  |\grad \u(\x)| \lesssim \frac{1}{|\x|^2}, \; |p(\x)| \lesssim \frac{1}{|\x|^2},  \, |\grad p(\x)| \lesssim \frac{1}{|\x|^3}.
$$

In \cite{farwig6}, the authors adopt a different functional framework and extend Galdi's existence result to Lorentz spaces.

Many authors have investigated the existence of weak solutions to equation \eqref{modifiedNS0}. One of the main difficulties in its analysis lies 
in the description of the asymptotic behavior of solutions at large distances, that is, as   $|\x| \fleche +\infty$.  Over the last decades, several approaches have been proposed to address these difficulties.
 One of these approaches consists in seeking solutions in weighted Sobolev spaces. In other words, one looks for solutions satisfying
\begin{equation}\label{integra_cond1}
\int_{\Rdt} (|\x|^2 +1)^k |\U(\x)|^2 < + \infty, \int_{\Rdt} (|\x|^2 +1)^{k+1} |\grad \U(\x)|^2 < + \infty, 
\end{equation}
and
\begin{equation}\label{integra_cond2}
\int_{\Rdt} (|\x|^2 +1)^{k+1}  |P (\x)|^2 < + \infty. 
\end{equation}
This approach was applied with a significant success to Stokes equations, that is when the nonlinear term $\U.\grad\U$ is dropped and  when $\vv_\infty = \vvom = \zero$; see, e. g., \cite{cattabriga1},  \cite{girault3},  \cite{alliot_these},   \cite{girault2},  \cite{farwig3})
\cite{boulmezaoud1},  \cite{boulmezaoud7} and  \cite{boulmezaoud8}. It was also applied to linearized  Oseen's system  when the motion of
   the body is only translational, $\vv_\infty\ne \zero$ and $\vvom= \zero$ (see  \cite{farwig3}, \cite{boulmezaoud5},   \cite{farwig2}, \cite{kracmar},  \cite{galdi1, galdi2} and references therein). However, 
   few results are available concerning nonlinear equations (\ref{modifiedNS0}) when the body is rotating ($\vvom \ne \zero$). 
   In \cite{farwig7, farwig8}, authors  prove existence of  weak solutions $(\u, p) \in L^1_{loc}(\Rdt)^\dim \times L^1_{loc}(\Rdt)$ satisfying 
\begin{equation}\label{estimateLq}
\|   \varrho  \grad^2 \u  \|_{L^q(\Rdt)^{\dim^3}} + \| \varrho \grad p\|_{L^q(\Rdt)^\dim}  < +\infty, 
\end{equation}
for some appropriate weight functions $\varrho$. Hishida \cite{hishida2006}  deals with solution satisfying 
\begin{equation}
\|\grad \u\|_{L^q(\Rdt)^\dim} + \|p\|_{L^q(\Rdt)} < +\infty.
\end{equation}
This result was generalized by Farwig and Hishida \cite{farwig6}, where existence was established in Lorentz spaces. In \cite{abada_boulmezaoud}, the authors proved the existence of weak solutions to the linearized equations: 
\begin{equation}\label{linearmodifiedNS}
\begin{array}{rcl}
- \nu \Delta \U  - (\vvom \times \x). \grad \U + \vvom \times \U  + \grad P  &=& \F \mbox{ in } \R^3, \\
\div \U &=& 0 \mbox{ in } \R^3, 
\end{array}
\end{equation}
satisfying conditions of the form (\ref{integra_cond1}) and (\ref{integra_cond2}) when 
$-2 \leq k \leq 2$.  
In this paper we  deal with the following non-linear system  in the whole space
\begin{equation}\label{modifiedNS2}
\begin{array}{rcl}
- \nu \Delta \U  - (\vvom \times \x). \grad \U + \vvom \times \U + \U . \grad \U + \grad P  &=& \F \mbox{ in } \R^3, \\
\div \U &=& 0 \mbox{ in } \R^3, 
\end{array}
\end{equation}
completed  with asymptotic conditions of the form  (\ref{integra_cond1}) and (\ref{integra_cond2}).  
Here, we will prove that under certain conditions on the right-hand side $\F$, 
 the system  \eqref{modifiedNS2} admits solutions satisfying
 \begin{equation}
\lim_{|\x| \fleche +\infty} |\x|^{2-\eps} \|\U(|\x|, .)\|_{L^p(\S^2)} = 0, 
\end{equation}
\begin{equation}
\lim_{|\x| \fleche +\infty} |\x|^{3-\eps} \|P(|\x|, .)\|_{L^p(\S^2)} = 0,
\end{equation} 
with  $p  > 3/2$ and  $0 < \eps <1$. Here $\S^2$ denotes the unit sphere of $\Rdt$. Setting
$$
\u(\x)= \frac{\lambda}{\nu}  \U (\frac{\x}{\lambda}), \; \pi(\x)= \frac{\lambda^2}{\nu^2}  P(\frac{\x}{\lambda}), \; \f(\x) =  \frac{\lambda^3}{\nu^2}  \F (\frac{\x}{\lambda}),   \; \mbox{ with } \lambda = \left(\frac{|\vvom|}{|\nu|}\right)^{1/2},
 $$
 we can rewrite system (\ref{modifiedNS2}) without loss  into the normalized form
\begin{equation}\label{prob_opT}
\begin{array}{rcl}
-\Delta \u + \u . \grad \u  -\left(\uom\times \x \right)\nabla \u +\uom\times \u +\nabla \pi &=& \f(\x), \; \mbox{ in } \Rdt, \\
\div \u &=& 0, 
\end{array}
\end{equation}
where 
$$
\uom = \frac{\vvom}{|\vvom|}. 
$$
The outline of this paper is as follows. In section \ref{main_res}  we state our main result concerning existence
and uniqueness of solutions to equations (\ref{prob_opT}) under  adequate assympotions. Section \ref{proof_mr_sec} contains
 some useful results concerning weighted spaces in $\Rd$, $n$ being arbitrary.  The last section \ref{proof_mr_sec} is devoted to
 the proof of the main result. 

\section{The main result}\label{main_res}
We first introduce some notations. In what follows, $C_b(\R^n)$,  $\dim \geq 1$ being an integer,  designates the space of bounded continuous functions  on $\R^2$ endowed with the sup norm.  Given an extended real $p$, $1 \leq p \leq +\infty$, we denote by $p'$ its conjugate defined  by
$\frac{1}{p} + \frac{1}{p'}= 1.$ As usual,  $L^{p}(\Rd)$ (resp. $L^\infty(\Rd)$)  is   the  space of (equivalence classes of) all measurable functions that
 are $p^{th}$ power integrable (resp. essentially bounded)  on $\R^{\dim}$. 
For $\x \in \R^\dim$, set 
 $$
 \rhr = (|\x|^2+1)^{1/2}. 
 $$
 Given two  integers $m \geq 0$ and $k \in \Z$ define   the space 
  \begin{equation*}
 W^{m,p}_{k}(\Rd)=\{u\in \Dp(\Rd) \bve \forall \mu \in \N^{\dim},
 |\mu|\leq m , \rhr^{|\mu|-m+k}D^{\mu}u\in L^{p}(\Rd)\},
 \end{equation*}
 endowed  with the norm
 $$
\left\Vert u\right\Vert _{W_{k }^{m,p}(\Rd) }=(
\sum_{|\mu| \leq m} \left\Vert \rhr
^{\left\vert \mu \right\vert -m+k }D^{\mu }u\right\Vert
_{L^p(\Rd)}^{p}) ^{\frac{1}{p}}
$$
We write $W^{-m,p'}_{-k}(\Rd)$  the dual space of $W^{m,p}_{k}(\Rd)$. The following  algebraic and topological inclusions hold
 $$...\subset W^{m,p}_{k}(\Rd)\subset W^{m-1,p}_{k-1}(\Rd)\subset ...\subset
W^{0,p}_{k-m}(\Rd) \subset W^{-1,p}_{k-m-1}(\Rd)  \subset ...$$
H\"older inequality also gives the imbedding
\begin{equation}\label{holder_weighted}
 W^{0, p}_\alpha(\Rd) \inclus W^{0, q}_\beta(\Rd)
\end{equation}

 when $1 \leq q < p \leq +\infty$ and $\alpha  - \beta   > \dim/q -  \dim/p$ (the imbedding obviously holds when $q=p$ and $\alpha \geq \beta$).     
 Note also that the following Green's formula holds  
 for  $u \in W^{1, p}_\alpha(\Rd)$ and $v \in W^{1, p'}_{-\alpha+1}(\Rd)$           
 \begin{equation}
 \int_{\Rd} \frac{\pt u}{\pt x_i} v dx =  - \int_{\Rd} u \frac{\pt v}{\pt x_i}  dx. 
 \end{equation}
 For further details on the spaces  $W^{m,p}_k$, the reader may consult the following references, which are by no means exhaustive:   \cite{hanouzet},  \cite{avantaggiati},  \cite{kufner}, \cite{kufner2}, \cite{giroire} and \cite{boulmezaoud2}. \\
 $\;$\\
 Although Problem (\ref{prob_opT}) considered here is formulated in a three-dimensional setting, some preliminary results of this paper will be stated in 
$\R^n$, where $n$ is not necessarily equal to 3. Nevertheless, Problem  (\ref{prob_opT})  will be treated exclusively in the case $n=3$.  Accordingly, when 
$n=3$,  the vectorial operator $\Rw$ defined on vector fields of $\R^{\dimm}$ as
 $$
 \Rw \vv = - \uom\wedge \vv + (\uom \wedge \x).\nabla \vv, 
$$
(see, e. g.,  \cite{abada_boulmezaoud} for some basic  properties of $\Rw$). \\
For  $m, k  \in \Z$ and $ 1 \leq p \leq +\infty$, define   the  weighted  space 
$$ \VV^{m, q}_k(\Rdt) = \{ \vv \in W^{m, q}_k(\Rdt)^\dimt \bve \Rw\vv  \in W^{m-2, q}_{k}(\Rdt)^\dimm\}.$$ 
This space is equipped with the norm
$$ \|\vv\|_{\VV^{m, p}_k(\Rdt)} = \dps{ \left\{ \|\vv\|^p_{ W^{m,
p}_k(\Rdt) } +\| \Rw \vv \|^p_{ W^{m-2, p}_{k}(\Rdt)}  \right\}^{1/p}}. $$ 
$\;$\\
A pair  $(\u, \pi)$  in  $\VV^{1,p}_{k}(\Rdt)^\dimt \times W^{0,p}_{k}(\Rdt)$ is said to be a weak solution of (\ref{prob_opT}) 
 if it satisfies this system in the sense of distributions. When $k > 2 - \frac{3}{p}$, it can be shown that $(\u, \pi) \in \VV^{1,p}_{k}(\Rdt)^\dimt \times W^{0,p}_{k}(\Rdt)$ is a weak solution of   (\ref{prob_opT}) if and only if:  
\begin{equation}\label{condi_gradgrad}
\int_{\Rdt}\grad \u .\grad \vphi  dx  + \< \u . \grad \u,  \vphi \>   + \< \Rw \u, \vphi\>  -\int_{\Rdt} \pi \div \vphi dx = \< \f, \vphi\>,  
\end{equation}
for all $ \vphi \in W^{1, p'}_{-k}(\Rdt)^3$. Here $\<.,.\>$ denotes the duality product between $W^{1, p}_k(\Rdt)$ and $W^{-1, p'}_{-k}(\Rdt)$.  The proof 
of this statement relies on the facts that   $\D(\Rdt)$ is dense in $W^{1, p'}_{-k}(\Rdt)$ (see, e. g., \cite{hanouzet}) and that 
 $\u . \grad \u \in W^{-1,p}_{k}(\Rdt)^3$ when $\u \in W^{1, p}_k(\Rdt)^3$ and $ k > 2 - \frac{3}{p}$. \\
  The main result of the paper is this
\begin{theorem}\label{main_result1}
Let  $p > 3/2$, $p \ne 3$, and  set
 $$
 k = [\frac{\dimt}{p'}] =
 \left\{
 \begin{array}{ll}
 1 &\mbox{ if } \frac{3}{2} < p < 3, \\
  2 &\mbox{ if } p > 3. \\
 \end{array}
 \right. 
  $$
Then, there exist two constants $\kappa > 0$ and $c > 0$ such that for any $\f \in W^{-1, p}_k(\Rdt)^\dimt$  satisfying
\begin{equation}\label{condition_compa31}
 \forall i \leq 3, \;  \< f_i, 1\> = 0. 
\end{equation}
and
\begin{equation}
\|\f \|_{W^{-1, p}_k(\Rdt)^\dimt} \lesssim \kappa,
\end{equation}
there exists a unique weak solution $(\u, \pi)  \in V^{1,p}_{k}(\Rdt)^\dimt \times W^{0,p}_{k}(\Rdt)$
of (\ref{prob_opT}) satisfying 
\begin{equation}
\|\u\| _{W^{1, p}_k(\Rdt)^\dimt} + \|\Rw \u\| _{W^{-1, p}_k(\Rdt)^\dimt} + \|\pi\| _{W^{0, p}_k(\Rdt)^\dimt}  \lesssim   \|\f \|_{W^{-1, p}_k(\Rdt)^\dimt}.  
\end{equation}
Moreover, if $\f \in W^{0, p}_{k+1}(\Rdt)^\dimt$, then $(\u, \pi)  \in V^{2,p}_{k+1}(\Rdt)^\dimt \times W^{1,p}_{k+1}(\Rdt)$ and
\begin{equation}\label{estim_ord2}
\|\u\| _{W^{2, p}_{k+1}(\Rdt)^\dimt} + \|\Rw \u\| _{W^{0, p}_{k+1}(\Rdt)^\dimt} + \|\pi\| _{W^{1, p}_{k+1}(\Rdt)^\dimt}  \lesssim   \|\f \|_{W^{0, p}_{k+1}(\Rdt)^\dimt}.  
\end{equation}
\end{theorem}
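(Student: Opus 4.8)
The plan is to treat \eqref{prob_opT} as a small nonlinear perturbation of its linear part and to solve it by a contraction argument built on the linear theory, then to bootstrap for regularity.

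First I would record the linear solvability. Dropping the convective term, the operator $(\u,\pi)\mapsto(-\Delta\u+\Rw\u+\grad\pi,\ \div\u)$ is the rotating--Stokes operator studied in \cite{abada_boulmezaoud}; I would use (or adapt to the present $W^{m,p}_k$ scale) that result to produce a bounded solution operator $\mathcal S$ sending each $\f\in W^{-1,p}_k(\Rdt)^\dimt$ with $\langle f_i,1\rangle=0$ to the unique weak solution $\mathcal S\f=(\u,\pi)\in\VV^{1,p}_k(\Rdt)^\dimt\times W^{0,p}_k(\Rdt)$, together with the bound
$$\|\u\|_{W^{1,p}_k(\Rdt)}+\|\Rw\u\|_{W^{-1,p}_k(\Rdt)}+\|\pi\|_{W^{0,p}_k(\Rdt)}\lesssim\|\f\|_{W^{-1,p}_k(\Rdt)}.$$
The compatibility condition \eqref{condition_compa31} is exactly the solvability condition dual to the constants, which lie in the test space $W^{1,p'}_{-k}(\Rdt)$ precisely because $k=[3/p']$ forces $k+1>3/p'$; this is where the choice of $k$ enters.

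For the bilinear estimate, since $\div\u=0$ I would write $\u.\grad\u=\div(\u\otimes\u)$. Differentiation is bounded from $W^{0,p}_k(\Rdt)$ into $W^{-1,p}_k(\Rdt)$ (by Green's formula and the definition of the dual norm), so it suffices to control $\|\u\otimes\u\|_{W^{0,p}_k}\simeq\|\rhr^{k}|\u|^2\|_{L^p}$. Splitting the weight and invoking the relevant weighted Sobolev embedding ($W^{1,p}_k(\Rdt)\hookrightarrow W^{0,p^*}_{k-1}(\Rdt)$ with $1/p^*=1/p-1/3$ when $\tfrac32<p<3$, and into a weighted $L^\infty$ space when $p>3$) together with the weighted H\"older inclusion \eqref{holder_weighted}, I would establish
$$\|\u.\grad\u\|_{W^{-1,p}_k}\lesssim\|\u\|^2_{W^{1,p}_k},\qquad \|\u.\grad\u-\vv.\grad\vv\|_{W^{-1,p}_k}\lesssim\bigl(\|\u\|_{W^{1,p}_k}+\|\vv\|_{W^{1,p}_k}\bigr)\|\u-\vv\|_{W^{1,p}_k}.$$
Moreover $\u.\grad\u$ is a divergence, so $\langle(\u.\grad\u)_i,1\rangle=0$ and the perturbed data $\f-\u.\grad\u$ still satisfies \eqref{condition_compa31}, keeping $\mathcal S$ applicable. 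I expect this weighted bilinear estimate to be the crux of the whole argument: the exponent $p^*$ and the bookkeeping of the weights are tight, and it is precisely the borderline values $p=\tfrac32$ and $p=3$, where the embedding degenerates, that must be excluded, which accounts for the hypotheses $p>\tfrac32$ and $p\ne3$.

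With $\mathcal S$ and the bilinear bound in hand I would define $\mathcal T(\u,\pi)=\mathcal S(\f-\u.\grad\u)$ on the closed ball $B_R$ of radius $R$ in $\VV^{1,p}_k(\Rdt)^\dimt\times W^{0,p}_k(\Rdt)$. Steps one and two give $\|\mathcal T(\u,\pi)\|\lesssim\|\f\|+R^2$ and a Lipschitz constant $\lesssim R$; choosing $R\simeq\|\f\|$ and $\kappa$ small enough that $cR<1$ makes $\mathcal T$ a self-map of $B_R$ and a contraction, so Banach's fixed-point theorem yields the unique solution and the first estimate, with global uniqueness following from the same smallness. Finally, assuming in addition $\f\in W^{0,p}_{k+1}(\Rdt)^\dimt$, I would view $(\u,\pi)$ as the solution of the linear rotating--Stokes system with right-hand side $\f-\u.\grad\u$, prove the higher bilinear bound $\|\u.\grad\u\|_{W^{0,p}_{k+1}}\lesssim\|\u\|^2_{W^{1,p}_k}$ by the same weighted Sobolev/H\"older technique, and invoke the second-order linear isomorphism from \cite{abada_boulmezaoud} to upgrade $(\u,\pi)$ to $\VV^{2,p}_{k+1}(\Rdt)^\dimt\times W^{1,p}_{k+1}(\Rdt)$ and obtain \eqref{estim_ord2}, a short bootstrap closing the loop since the gained regularity of $\u$ feeds back into that of $\u.\grad\u$.
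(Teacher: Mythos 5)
Your existence-and-uniqueness argument is essentially the paper's own: the same linear solution operator built from Proposition \ref{proposition3} of \cite{abada_boulmezaoud}, the same verification that the convective term inherits the compatibility condition $\<(\u.\grad\u)_i,1\>=0$, the same weighted bilinear estimate (the paper proves it via a product lemma, $v\in W^{1,p}_k(\Rdt)$, $w\in W^{0,p}_k(\Rdt)$ implies $vw\in W^{-1,p}_k(\Rdt)$ whenever $k>2-3/p$, rather than through the divergence form $\div(\u\otimes\u)$, but the two routes are interchangeable), and the same Banach fixed point on a small ball. One misattribution worth noting: the exclusions $p\neq 3/2$ and $p\neq 3$ are imposed by the linear isomorphism of Proposition \ref{proposition3} (these are the critical values at which $3/p'$ is an integer), not primarily by a degeneration of your bilinear embedding.

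The regularity step, however, has a genuine gap. You propose to prove the bound $\|\u.\grad\u\|_{W^{0,p}_{k+1}(\Rdt)}\lesssim\|\u\|^2_{W^{1,p}_k(\Rdt)}$ directly from the first-order information and then apply the second-order linear theory once. That bound is false in the range $3/2<p<3$ (where $k=1$): from $\u\in W^{1,p}_1(\Rdt)^3$ one only gets $\rhr\,\u\in L^{\ps}(\Rdt)^3$ and $\rhr\,\grad\u\in L^{p}(\Rdt)^9$, so H\"older places the product merely in $W^{0,s}_{2}(\Rdt)^3$ with $\frac{1}{s}=\frac{2}{p}-\frac{1}{3}>\frac{1}{p}$, i.e.\ $s<p$; to land in $W^{0,p}_{2}(\Rdt)^3$ by H\"older you would need $\rhr\,\u\in L^\infty(\Rdt)^3$, which requires $p>3$. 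This is exactly why the paper splits the regularity proof into two cases: for $p>3$ the direct estimate works ($\u.\grad\u\in W^{0,p}_{3+3/p}(\Rdt)^3\subset W^{0,p}_{3}(\Rdt)^3$), while for $3/2<p<3$ it inserts an intermediate bootstrap: $\u.\grad\u\in W^{0,s}_{2}(\Rdt)^3\subset W^{-1,r}_{1}(\Rdt)^3$ with $\frac{2}{r}=\frac{1}{3}+\frac{1}{p}$ (so $p<r<3$), the linear theory applied at level $r$ upgrades $\u$ to $W^{1,r}_{1}(\Rdt)^3$, and only with this improved integrability does the product lemma give $\u.\grad\u\in W^{0,p}_{2}(\Rdt)^3$, the exponent $r$ being chosen precisely so that $\frac{2}{r}-\frac{1}{3}=\frac{1}{p}$; one further application of Proposition \ref{proposition3} then yields $(\u,\pi)\in W^{2,p}_{k+1}(\Rdt)^3\times W^{1,p}_{k+1}(\Rdt)$. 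Your proof needs this intermediate step (or an equivalent one); as written, the case $3/2<p<3$ of \eqref{estim_ord2} does not close.
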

The proof of theorem \ref{main_result1} will be achieved through several steps. It is postponed to 
section  \ref{proof_mr_sec} hereafter. 
\begin{remark}
Condition \ref{condition_compa31} is necessary and follows from (\ref{condi_gradgrad})  with $\vphi = \e_i$, where
$(\e_1,\e_2, \e_3)$ is the canonical basis of $\Rdt$. Notice that  $1 \in W^{1, p'}_{-k}(\Rdt)$. 
\end{remark}
\begin{remark}
Theorem \ref{main_result1} can be  extended  by means of Mozzi-Chasles transformation (see, e. g.,  \cite{galdisilvestre})  to equations of the form (\ref{modifiedNS0}) with $\vv_\infty \ne 0$ and  $\vvom \times \vv_\infty = 0$. 
\end{remark}
\begin{corollary}
Under the same assumptions of theorem \ref{main_result1}, the solution $(\u, \pi)$ satisfies the following asymptotic property
\begin{equation}\label{asympto_proper1}
\dps{ \lim_{|\x| \fleche +\infty} |\x|^{2-\eps} \|u(|\x|, .)\|_{L^p(\S^2)} = 0 }, \\
\end{equation}
where $\S^2$ denotes the unit sphere of $\R^3$ and $\eps=\frac{3}{p'} - [\frac{3}{p'}] \in ]0, 1[$. Moreover, 
\begin{itemize}
\item If $\f \in W^{0, p}_{k+1}(\Rdt)^\dimt$ (with $k = [\frac{\dimt}{p'}]$), then 
\begin{equation}\label{asympto_proper2}
\dps{ \lim_{|\x| \fleche +\infty} |\x|^{3-\eps} \|\pi(|\x|, .)\|_{L^p(\S^2)} = 0 }, 
\end{equation}
\item If $p > 3$ then $(1+|\x|)^{1+3/p} \u \in C_b(\R^3)^3$ and 
$$
\lim_{|\x| \to +\infty} |\x|^{1+3/p} |\u(\x)| = 0. 
$$
\item If $p > 3$ and $\f \in W^{0, p}_{3}(\Rdt)^\dimt$, then 
$(1+|\x|)^{2+3/p} \pi \in C_b(\R^3)$ 
and 
\begin{equation}\label{asympto_proper3}
\lim_{|\x| \to +\infty} |\x|^{2+3/p}| |\pi(\x)| = 0. 
\end{equation}
\end{itemize}
\end{corollary}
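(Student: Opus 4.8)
The plan is to reduce every assertion to a one-dimensional decay estimate for the spherical $L^p$-norms of $\u$ and $\pi$, using only the memberships $\u\in W^{1,p}_k(\Rdt)^3$ and (when $\f\in W^{0,p}_{k+1}$) $\pi\in W^{1,p}_{k+1}(\Rdt)$ furnished by Theorem \ref{main_result1}, together with the arithmetic identity $3/p = 3-k-\eps$ coming from $\eps = 3/p'-[3/p']$ and $k=[3/p']$. First I would fix the setup: writing $\x=r\sigma$ with $r=|\x|$, $\sigma\in\S^2$, set $g(r)=\|\u(r,\cdot)\|_{L^p(\S^2)}$ and $\phi(r)=\|\partial_r\u(r,\cdot)\|_{L^p(\S^2)}$. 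Since $|\partial_r\u|\le|\grad\u|$, the relations $\rhr^{k-1}\u\in L^p(\Rdt)$ and $\rhr^{k}\grad\u\in L^p(\Rdt)$ give, after passing to spherical coordinates,
$$\int_0^\infty (r^2+1)^{(k-1)p/2} g(r)^p\, r^2\,dr < \infty, \qquad \int_0^\infty (r^2+1)^{kp/2}\phi(r)^p\, r^2\,dr < \infty.$$
Minkowski's integral inequality applied to $\u(s,\cdot)-\u(r,\cdot)=\int_r^s\partial_t\u(t,\cdot)\,dt$ then yields $|g(s)-g(r)|\le\int_r^s\phi(t)\,dt$, so that $g$ is controlled along rays by $\phi$.

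Next I would run a dyadic averaging argument. For $s\in[r,2r]$ one has $g(r)\le g(s)+\int_r^{2r}\phi$, and averaging in $s$ over the interval of length $r$ gives
$$g(r)\le\frac1r\int_r^{2r}g(s)\,ds+\int_r^{2r}\phi(t)\,dt.$$
Bounding each term by Hölder and extracting the weight factors $r^{(k-1)p+2}$ and $r^{kp+2}$ (legitimate because both exponents are positive, as $k\ge1$) produces $r^{\gamma}g(r)\le\epsilon(r)^{1/p}+\delta(r)^{1/p}$ with $\gamma=k-1+3/p=2-\eps$, where $\epsilon(r)=\int_r^\infty s^{(k-1)p+2}g^p\,ds$ and $\delta(r)=\int_r^\infty s^{kp+2}\phi^p\,ds$ are tails of the two convergent integrals above; since these tails vanish as $r\to\infty$, this is exactly \eqref{asympto_proper1}. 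For the pressure, the same lemma applied to $\pi\in W^{1,p}_{k+1}(\Rdt)$ with $k+1$ in place of $k$ produces the exponent $k+3/p=3-\eps$, which is \eqref{asympto_proper2}.

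For the pointwise statements in the regime $p>3$ (where $k=2$), I would instead invoke a weighted Morrey embedding. On the ball $B=B(\x,|\x|/2)$ the scaled Morrey inequality $\|v\|_{C^0(B)}\lesssim\rho^{-3/p}\|v\|_{L^p(B)}+\rho^{1-3/p}\|\grad v\|_{L^p(B)}$ with $\rho\sim|\x|$, combined with $\rhr\sim|\x|$ on $B$, gives $|\u(\x)|\lesssim|\x|^{-(k-1+3/p)}\big(\|\rhr^{k-1}\u\|_{L^p(B)}+\|\rhr^{k}\grad\u\|_{L^p(B)}\big)$, the two exponents coinciding. This yields $\rhr^{1+3/p}\u\in C_b(\R^3)^3$ (with continuity on compacta from local Morrey), and replacing the full norms by tail norms over $\{|y|>|\x|/4\}$ upgrades boundedness to $\lim_{|\x|\to+\infty}|\x|^{1+3/p}|\u(\x)|=0$. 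The identical argument applied to $\pi\in W^{1,p}_3(\Rdt)$ (available once $\f\in W^{0,p}_3$) gives the exponent $2+3/p$ and hence \eqref{asympto_proper3}.

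The main obstacle is the criticality of the exponent $2-\eps$: it is precisely the borderline rate at which $\int^\infty r^{\gamma-1}g\,dr$ diverges logarithmically, so a naive Hölder estimate over the whole half-line only produces boundedness of $r^{\gamma}g$. The genuine ``little-$o$'' conclusion requires the localized dyadic device above, whose role is exactly to convert the vanishing of the tails $\epsilon(r),\delta(r)$ of the convergent weighted integrals into decay rather than mere boundedness; the weighted Morrey step for $p>3$ presents the same criticality, handled by the same passage to tail norms.
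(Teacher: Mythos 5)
Your proposal is correct, and it follows the same underlying strategy as the paper: reduce every item of the corollary to a radial decay property of functions in $W^{1,p}_\alpha(\Rdt)$ and then do the exponent arithmetic $3/p = 3-k-\eps$. The difference is that the paper does not prove that decay property at all — it simply cites Proposition 3.8 of Alliot's thesis, which states precisely that for $u \in W^{1,p}_\alpha(\Rdt)$ one has
$|\x|^{(\alpha-1)p+3}\int_{\S^2}|u(|\x|,\cdot)|^p\,d\sigma \leq c \int_{|\y|>R}\rhry^{\alpha p}|\grad u|^p\,dy$ for $|\x|>R$, together with the pointwise version $\lim_{|\x|\to+\infty}|\x|^{\alpha+3/p-1}|u(|\x|,\cdot)|=0$ when $p>3$ — and then applies it with $\alpha = k$ (velocity), $\alpha=k+1$ (pressure), exactly as you do. Your Minkowski-plus-dyadic-averaging argument and your scaled Morrey estimate on balls $B(\x,|\x|/2)$ with tail norms are, in effect, a self-contained proof of that cited proposition; your version of the spherical estimate carries a tail of the weighted norm of $u$ itself in addition to the gradient tail (the cited statement uses only the gradient tail, obtainable by integrating along rays from infinity rather than averaging over $[r,2r]$), but this makes no difference for the conclusion. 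What the paper's route buys is brevity; what yours buys is a self-contained argument and an explicit explanation of why one gets genuine little-$o$ decay rather than mere boundedness at the critical exponent $2-\eps$, namely that the right-hand sides are tails of convergent weighted integrals. The only points to polish are routine: the identity $\u(s,\cdot)-\u(r,\cdot)=\int_r^s\partial_t\u(t,\cdot)\,dt$ in $L^p(\S^2)$ requires the standard remark that the precise representative of a $W^{1,p}_{\rm loc}$ function is absolutely continuous along almost every ray (and that spherical traces are defined for a.e. $r$), and the extraction of powers of $r$ on $[r,2r]$ uses only $s\sim r$ there, not positivity of the exponents.
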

The asymptotic properties (\ref{asympto_proper1})-(\ref{asympto_propert})    can be deduced from
the following property (see, e. g.,  \cite{alliot_these}, Proposition 3.8):  there exists a constant $c > 0$ such that 
for all $u \in W^1_\alpha(\Rdt)$ and $R > 0$, 
$$
|\x|^{(\alpha -1)p +3 }  \int_{\S^2} |u(|\x|, .)|^p d\sigma   \leq c \int_{|\y| > R} \rhry^{\alpha p} |\grad u|^{p} dy, \; \mbox{ for } |\x| > R, 
$$
and when $p > n$
$$
\lim_{ |\x| \to +\infty }  |\x|^{\alpha + \frac{3}{p} -1} |u(|\x|, .)| = 0. 
$$

This result provides a further refinement to the one proven by Galdi \cite{galdi2003}, in which it is shown that if $f$
satisfies the condition 
$$
(1+|x|)^3 |f| \in L^\infty(\R^3)^3, 
$$
 along with a few other assumptions, then 
 $$
(1+|x|) \u \in L^\infty(\R^3)^3, \; (1+|x|)^2 \grad u \in L^\infty(\R^3)^{9}, \; (1+|x|)^3 p \in L^\infty(\R^3). 
$$
Let us also note that the estimates  \eqref{estim_ord2} differ from those obtained in \cite{farwig8}  for the linear problem  with rotation.

\section{The proof of the main  result}
The purpose here is to prove the main theorem. Let us start with the following section, which contains some preliminary results useful for the proof.
\subsection{Some useful preliminary results on weighted spaces}\label{prepsec}
The results in this section are all stated in $\R^n$, where $n \geq 1$ is arbitrary and not necessarily equal to $3$.
We prove here for later use some results on weighted Sobolev spaces. 
We assume throughout the section that dimension $\dim\geq 1$ is arbitrary and is not necessarily equal to $3$.  \\
In the sequel, when $1< p < \dim$ define $\ps \in [1, \infty)$ as:
$$
\frac{1}{\ps} = \frac{1}{p}- \frac{1}{\dim}. 
$$
The following lemma is a direct consequence of H\"older inequality
\begin{lemma}\label{product_lemma0}
Let   $1 < p < +\infty$,   $1 < r < +\infty$  and $1 < s \leq  +\infty$  be  such that
$$
 \frac{1}{s} + \frac{1}{r} = \frac{1}{p}.
$$
Let $u \in W^{0, r}_\alpha(\Rd)$ and 
$v \in W^{0, s}_\beta(\Rd)$ with $\alpha, \beta \in \R$.  Then, $uv \in W^{0, p}_{\alpha+\beta}(\Rd)$.
\end{lemma}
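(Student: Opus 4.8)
The plan is to reduce the statement directly to the generalized H\"older inequality, after unwinding the definition of the weighted spaces in the case $m=0$. Recall that for $m=0$ the only admissible multi-index is $\mu$ with $|\mu|=0$, so the weight $\rhr^{|\mu|-m+k}$ collapses to $\rhr^{k}$; consequently $W^{0,p}_k(\Rd)$ is precisely the set of distributions $u$ for which $\rhr^{k} u \in L^p(\Rd)$, with $\|u\|_{W^{0,p}_k(\Rd)} = \|\rhr^{k} u\|_{L^p(\Rd)}$. In particular the hypotheses say exactly that $f := \rhr^\alpha u \in L^r(\Rd)$ and $g := \rhr^\beta v \in L^s(\Rd)$. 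Since functions in these weighted spaces are locally integrable, the pointwise product $uv$ is a well-defined measurable function, so the statement $uv \in W^{0,p}_{\alpha+\beta}(\Rd)$ is meaningful.

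First I would record the elementary identity $\rhr^{\alpha+\beta} uv = (\rhr^\alpha u)(\rhr^\beta v) = fg$ almost everywhere, which holds simply because $\rhr^{\alpha+\beta} = \rhr^\alpha \rhr^\beta$ pointwise. Thus proving $uv \in W^{0,p}_{\alpha+\beta}(\Rd)$ amounts to showing $fg \in L^p(\Rd)$, reducing the lemma to a single $L^p$ estimate for the product $fg$.

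Next I would apply H\"older's inequality with exponents $r$ and $s$ satisfying $\frac1r + \frac1s = \frac1p$. The cleanest route is to apply the ordinary H\"older inequality to the pair $|f|^p \in L^{r/p}(\Rd)$ and $|g|^p \in L^{s/p}(\Rd)$, whose exponents $r/p$ and $s/p$ are genuine conjugates since $\frac{p}{r}+\frac{p}{s}=1$ and both exceed $1$ when $s<+\infty$; this yields
$$
\|fg\|_{L^p(\Rd)}^p = \int_{\Rd} |f|^p |g|^p \, dx \leq \Big(\int_{\Rd} |f|^{r}\,dx\Big)^{p/r}\Big(\int_{\Rd}|g|^{s}\,dx\Big)^{p/s} = \|f\|_{L^r(\Rd)}^p\,\|g\|_{L^s(\Rd)}^p,
$$
so that $\|uv\|_{W^{0,p}_{\alpha+\beta}(\Rd)} \leq \|u\|_{W^{0,r}_\alpha(\Rd)}\,\|v\|_{W^{0,s}_\beta(\Rd)}$, which is the claim together with the expected norm estimate.

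The only point requiring separate (but immediate) attention is the endpoint $s = +\infty$, in which case $r = p$ and the argument degenerates to the trivial bound $\|fg\|_{L^p(\Rd)} \leq \|g\|_{L^\infty(\Rd)}\|f\|_{L^p(\Rd)}$; no genuine obstacle arises there. In truth there is no substantial difficulty in this lemma: it is, as stated, a direct consequence of H\"older's inequality, and its purpose is simply to serve as a building block for the product and nonlinearity estimates (notably the control of $\u.\grad\u$) used later in the proof of Theorem \ref{main_result1}.
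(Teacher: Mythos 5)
Your proof is correct and follows exactly the route the paper intends: the paper gives no written proof, stating only that the lemma ``is a direct consequence of H\"older inequality,'' and your argument (reducing to $\rhr^{\alpha+\beta}uv=(\rhr^\alpha u)(\rhr^\beta v)$ and applying H\"older with the conjugate pair $r/p$, $s/p$, plus the trivial $s=+\infty$ endpoint) is precisely that consequence spelled out, together with the expected norm bound.
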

\begin{lemma}\label{lemma_inj1}
Let $1 < p \leq + \infty$ and $\alpha \in \R$. We have
\begin{itemize}
\item If $p < \dim$,  then $W^{1, p}_\alpha(\Rd) \inclus W^{0, \ps}_\alpha(\Rd)$,
\item If $p > \dim$,  then $W^{1, p}_\alpha(\Rd) \inclus W^{0, \infty}_{\alpha+\dim/p-1}(\Rd)$. 
\end{itemize}
\end{lemma}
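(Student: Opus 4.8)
The plan is to reduce both embeddings to the classical (unweighted) Sobolev and Morrey inequalities by a dyadic decomposition of $\Rd$ into annuli on which the weight $\rhr$ is essentially constant, combined with a scaling argument. First I would fix the disjoint dyadic shells $C_j = \{2^j \leq |\x| < 2^{j+1}\}$ for $j \geq 0$, so that $\Rd = B \cup \bigcup_{j\geq 0} C_j$ with $B = \{|\x| < 1\}$, and observe that $\rhr \approx 2^j$ on $C_j$ with constants independent of $j$. On each $C_j$ I would rescale by setting $\y = 2^{-j}\x$ and $u_j(\y) = u(2^j\y)$, which maps $C_j$ onto the fixed reference annulus $A = \{1 \leq |\y| < 2\}$, a bounded Lipschitz domain on which the unweighted inequalities hold with a single constant. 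Since the weight is bounded above and below on each $C_j$, the restriction $u|_{C_j}$ lies in $W^{1,p}(C_j)$ and rescaling preserves $W^{1,p}$, so $u_j \in W^{1,p}(A)$.

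For the first embedding ($p < \dim$) I would apply $W^{1,p}(A) \inclus L^{\ps}(A)$ to $u_j$ and then undo the scaling. Changing variables gives $\|u_j\|_{L^{\ps}(A)} = 2^{-\dim j/\ps}\|u\|_{L^{\ps}(C_j)}$, $\|u_j\|_{L^p(A)} = 2^{-\dim j/p}\|u\|_{L^p(C_j)}$ and $\|\grad u_j\|_{L^p(A)} = 2^{j(1-\dim/p)}\|\grad u\|_{L^p(C_j)}$. Using the defining relation $1/\ps = 1/p - 1/\dim$, the powers of $2$ collapse and, after multiplying by $2^{j\alpha} \approx \rhr^\alpha|_{C_j}$, the three terms become exactly the restrictions to $C_j$ of $\|\rhr^\alpha u\|_{L^{\ps}}$, $\|\rhr^{\alpha-1}u\|_{L^p}$ and $\|\rhr^\alpha \grad u\|_{L^p}$; concretely,
$$
2^{j\alpha}\|u\|_{L^{\ps}(C_j)} \lesssim 2^{j(\alpha-1)}\|u\|_{L^p(C_j)} + 2^{j\alpha}\|\grad u\|_{L^p(C_j)}.
$$

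The crucial point is the summation over $j$. Writing $a_j = 2^{j\alpha}\|u\|_{L^{\ps}(C_j)}$, $b_j = 2^{j(\alpha-1)}\|u\|_{L^p(C_j)}$ and $c_j = 2^{j\alpha}\|\grad u\|_{L^p(C_j)}$, one has $a_j \lesssim b_j + c_j$. Since $p < \ps$, the elementary inclusion $\ell^p \inclus \ell^{\ps}$ yields $(\sum_j a_j^{\ps})^{1/\ps} \leq (\sum_j a_j^p)^{1/p}$, and this goes in the favorable direction: summing the local estimates in $\ell^p$ and using Minkowski gives
$$
\|\rhr^\alpha u\|_{L^{\ps}(\Rd)} \lesssim \|\rhr^{\alpha-1}u\|_{L^p(\Rd)} + \|\rhr^\alpha\grad u\|_{L^p(\Rd)} \lesssim \|u\|_{W^{1,p}_\alpha(\Rd)}.
$$
I want to stress that the gain of $1/\dim$ in integrability is precisely what produces the favorable scaling above, which in turn lets the \emph{weaker} weight $\rhr^{\alpha-1}$ on $u$ (the one actually available in $W^{1,p}_\alpha$) suffice rather than $\rhr^\alpha$; this matching of the scaling gain against the weight gap, together with the direction of the $\ell^q$ inclusion, is the heart of the matter and the step I expect to require the most care. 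The contribution of the central ball $B$ is handled directly by $W^{1,p}(B)\inclus L^{\ps}(B)$, since $\rhr \approx 1$ there.

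The second embedding ($p > \dim$) follows the same scheme with Morrey's inequality $W^{1,p}(A) \inclus L^\infty(A)$ in place of Sobolev's. The rescaling gives on each shell $\|u\|_{L^\infty(C_j)} \lesssim 2^{-\dim j/p}\|u\|_{L^p(C_j)} + 2^{j(1-\dim/p)}\|\grad u\|_{L^p(C_j)}$, and multiplying by $2^{j(\alpha + \dim/p - 1)} \approx \rhr^{\alpha+\dim/p-1}|_{C_j}$ turns the left side into the $C_j$-contribution of $\|\rhr^{\alpha+\dim/p-1}u\|_{L^\infty}$ and the right side into $b_j + c_j$ as before. Taking the supremum over $j$ and using $\sup_j b_j \leq (\sum_j b_j^p)^{1/p}$ closes the estimate,
$$
\|\rhr^{\alpha+\dim/p-1}u\|_{L^\infty(\Rd)} \lesssim \|\rhr^{\alpha-1}u\|_{L^p(\Rd)} + \|\rhr^\alpha\grad u\|_{L^p(\Rd)} \lesssim \|u\|_{W^{1,p}_\alpha(\Rd)},
$$
again with $B$ treated directly. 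The borderline case $p=+\infty$ needs no argument, being immediate from the definition of the spaces. The only genuine obstacle remains the exponent bookkeeping in the two rescaling identities; everything else is a routine change of variables together with the two summation observations above.
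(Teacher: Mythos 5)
Your proof is correct, but it takes a genuinely different route from the paper's. The paper does not decompose $\Rd$ at all: for $p<\dim$ it applies the global Sobolev inequality on the whole space directly to the weighted function $\phi=\rhr^\alpha u$, observing that $\grad(\rhr^\alpha u)=\alpha\rhr^{\alpha-2}\x\,u+\rhr^\alpha\grad u$ is controlled in $L^p(\Rd)$ by the $W^{1,p}_\alpha$-norm (since $|\x|\leq\rhr$, the first term only needs the weaker weight $\rhr^{\alpha-1}$ on $u$ --- this plays exactly the role of your scaling bookkeeping); for $p>\dim$ it splits $v=\chi v+(1-\chi)v$ and applies Morrey's inequality to $\rhr^\alpha(1-\chi)v$ with base point the origin, the cutoff part being trivial. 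That substitution argument is shorter, but it leans on the density of $\D(\Rd)$ in $W^{1,p}_0(\Rd)$ (cited from Hanouzet) to transfer the classical inequalities to the weighted function. Your dyadic-annulus-plus-rescaling argument avoids density considerations entirely --- you only invoke the classical embeddings on a fixed bounded Lipschitz annulus --- and it is more robust: it would survive replacing $\rhr^\alpha$ by any weight that is essentially constant on dyadic shells, whereas the paper's trick is tied to weights whose gradient loses exactly one power. The two delicate points are the ones you flag, and you handle both correctly: the powers of $2$ cancel precisely because $1/\ps=1/p-1/\dim$, and the shell summation goes the right way because $p<\ps$ gives $\ell^p\inclus\ell^{\ps}$ (respectively, a supremum over shells bounded by the $\ell^p$ norm when $p>\dim$). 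Your proof is complete; you even dispose of $p=+\infty$ explicitly, a case the paper's Morrey argument tacitly omits.
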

\begin{proof}
\begin{itemize}
\item  Assume that $p < \dim$. Sobolev's inequality states that
\begin{equation}\label{sobo_ineq}
\|\phi\|_{L^{\ps}(\Rd)}  \lesssim \|\grad \phi\|_{L^{p}(\Rd)}, 
\end{equation}
for $\phi \in \D (\Rd)$. By density of $\D (\Rd)$ in  $W^{1,p}_0(\Rd)$ this inequality remains valid for $\phi \in W^{1,p}_0(\Rd)$.
Let $u \in W^{1, p}_\alpha(\Rd)$. Taking $ \phi = \rhr^\alpha u \in W^{1,p}_0(\Rd)$ in  (\ref{sobo_ineq})  gives
$$
\|\rhr^{\alpha} u\|_{L^{\ps}(\Rd)} \lesssim  \|\grad (\rhr^\alpha u) \|_{L^{p}(\Rd)}\lesssim  \|u\|_{W^{1,p}_\alpha(\Rd)}. 
$$
\item Assume now that $ \dim < p < +\infty $. One has $W^{1, p}_\alpha(\Rd)  \inclus W^{1, p}_{loc}(\Rd) \inclus \C^0_{loc}(\Rd)$. 
The well known  Morrey's inequality says  that for any $ \varphi \in W_{loc}^{1, p}(\Rd)$
$$
|\varphi(\x)- \varphi(\y)|  \lesssim C |\x|^{1-\dim/p} \|\grad \varphi\|_{L^p(B(\x, 2 r))} 
$$
for all $\x, \y \in \Rd$ such that $|\x-\y| = r \geq 0$. Here    $B(\x, 2r) = \{ \z \in \Rd \bve |\x - \z| \leq 2r\}$
and $C$ is constant depending only on $p$ and $\dim$. The inequality remains valid when $\varphi \in W^{1, p}_0(\Rd)$.  \\
Let   $\chi$  be a fixed   ${\mathcal C}^{\infty}$ function satisfying 
 $$
 \chi(\x) =1 \mbox{ for  } |\x| \leq 1,  \; \chi(\x) =0 \mbox{ for  } |\x| \geq 2. 
 $$
Consider a function $v \in W^{1, p}_\alpha(\Rd)$.  One can write $v= v_1+v_2$ with  $v_1 = \chi v \in \C_b(\R^3)$ and $v_2 = (1-\chi)v \in W^{1,p}_\alpha(\Rd)$. 
Morrey inequality applied to $\rhr^\alpha v_2 \in W^{1,p}_0(\Rd)$, with $\y = \zero$,  gives 
$$
\begin{array}{rcl}
|\rhr^\alpha  v_2(\x)| & \lesssim& |\x|^{1-\dim/p} \|\grad (\rhr^\alpha v_2)\|_{L^p(\Rd)}  \\
 &  \lesssim&    |\x|^{1-\dim/p} \|v_2\|_{W^{1, p}_\alpha(\Rd)}  
\end{array}
$$
Thus, $\rhr^{\alpha + \dim/p -1} v_2 \in L^\infty(\Rd)$.  On the other hand,
$\rhr^{\alpha + \dim/p -1} v_1 \in L^\infty(\Rd)$  since $v_1(\x) = 0$ when $|\x| \geq 2$. This ends the proof. 
\end{itemize}
\end{proof}
\begin{corollary}\label{coro1_0}
Let $p$, $q$,   $\alpha$ and $\beta \in \R$ four real numbers such that $1 < p \leq +\infty$, $1 < q < +\infty$ and
\begin{equation}\label{cond_inclu}
 0 \leq \frac{\dim}{q} - \frac{\dim}{p}+1 =  \frac{\dim}{q}  -  \frac{\dim}{\ps}   < \alpha - \beta.
 \end{equation}
Then,   
\begin{equation}\label{nouv_incl}
W^{1, p}_\alpha(\Rd) \inclus W^{0, q}_\beta(\Rd).
 \end{equation}
 Moreover, this imbedding remains valid if $q =+\infty$, $p > n$ and $\alpha - \beta \geq 1 - \frac{\dim}{p}$. 
\end{corollary}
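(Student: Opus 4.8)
The plan is to derive \eqref{nouv_incl} by chaining the two facts already at our disposal: the Sobolev/Morrey step of Lemma~\ref{lemma_inj1}, which raises the Lebesgue exponent from $p$ while keeping the weight, and the weighted H\"older embedding \eqref{holder_weighted}, which lowers the Lebesgue exponent at the cost of a gap between the weights. In this reading, the inequality $0 \leq \frac{\dim}{q} - \frac{\dim}{p} + 1$ of \eqref{cond_inclu} is exactly the condition $q \leq \ps$ (when $p < \dim$) that makes it legitimate to reach $L^q$ after passing through $L^{\ps}$, while the strict inequality $\frac{\dim}{q} - \frac{\dim}{p} + 1 < \alpha - \beta$ furnishes the weight gap required by \eqref{holder_weighted}.

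First I would treat $p < \dim$. Lemma~\ref{lemma_inj1} gives $W^{1,p}_\alpha(\Rd) \inclus W^{0,\ps}_\alpha(\Rd)$. If $q < \ps$ then $\frac{\dim}{q} - \frac{\dim}{\ps} = \frac{\dim}{q} - \frac{\dim}{p} + 1 < \alpha - \beta$, so \eqref{holder_weighted} yields $W^{0,\ps}_\alpha(\Rd) \inclus W^{0,q}_\beta(\Rd)$ and \eqref{nouv_incl} follows by composition; if $q = \ps$ the hypothesis reduces to $\alpha > \beta$ and one invokes instead the monotonicity of weights noted right after \eqref{holder_weighted}. For $p > \dim$ (including $p = +\infty$) I would use the Morrey part of Lemma~\ref{lemma_inj1}, namely $W^{1,p}_\alpha(\Rd) \inclus W^{0,\infty}_{\alpha+\dim/p-1}(\Rd)$, and then apply \eqref{holder_weighted} with exponents $+\infty$ and $q$: the required gap $(\alpha+\dim/p-1)-\beta > \frac{\dim}{q}$ is precisely $\alpha - \beta > \frac{\dim}{q} - \frac{\dim}{p} + 1$. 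The \emph{moreover} statement ($q = +\infty$) follows along the same route, the last H\"older step being replaced by the monotonicity of the $L^\infty$-weight, which applies exactly when $\alpha + \dim/p - 1 \geq \beta$, i.e. $\alpha - \beta \geq 1 - \dim/p$.

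The only genuinely delicate point is the borderline exponent $p = \dim$, where Lemma~\ref{lemma_inj1} is silent: the homogeneous Sobolev inequality fails and no $L^\infty$ bound is available. Here \eqref{cond_inclu} reads $\alpha - \beta > \frac{\dim}{q}$, and I would reduce to the already settled case $p < \dim$ by inserting an auxiliary exponent (note $\dim \geq 2$, since $p = \dim > 1$). Fixing $\tilde p \in (1, \dim)$ close enough to $\dim$ that $\frac{\dim}{q} - \frac{\dim}{\tilde p} + 1 > 0$, I would pick $\tilde\alpha$ in the interval $\frac{\dim}{q} - \frac{\dim}{\tilde p} + 1 + \beta < \tilde\alpha < \alpha + 1 - \frac{\dim}{\tilde p}$, which is nonempty precisely because $\alpha - \beta > \frac{\dim}{q}$ (its length equals $(\alpha - \beta) - \frac{\dim}{q}$). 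Applying \eqref{holder_weighted} separately to $u$ and to each component of $\grad u$ then gives $W^{1,\dim}_\alpha(\Rd) \inclus W^{1,\tilde p}_{\tilde\alpha}(\Rd)$ (the upper bound on $\tilde\alpha$ is exactly the weight gap this requires), and the case $p = \tilde p < \dim$ already proved delivers $W^{1,\tilde p}_{\tilde\alpha}(\Rd) \inclus W^{0,q}_\beta(\Rd)$. I expect this $p = \dim$ reduction to be the main obstacle; the remaining cases are a routine concatenation of the two lemmas, the only subtlety being the equality sub-cases $q = \ps$ and $q = +\infty$, where \eqref{holder_weighted} must be used in its weight-monotonicity form rather than its strict form.
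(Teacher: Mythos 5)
Your proposal is correct and follows essentially the same route as the paper: the cases $p<\dim$ and $p>\dim$ are handled by composing Lemma~\ref{lemma_inj1} with the weighted H\"older imbedding \eqref{holder_weighted}, and the critical case $p=\dim$ is treated by exactly the paper's reduction, namely dropping to an auxiliary exponent $\tilde p\in(1,\dim)$ close to $\dim$ and an intermediate weight $\tilde\alpha$ in the very same interval, whose nonemptiness is equivalent to $\alpha-\beta>\dim/q$. Your explicit handling of the boundary sub-cases $q=\ps$ and $q=+\infty$ via the weight-monotonicity form of \eqref{holder_weighted} is a presentational refinement of what the paper leaves implicit, not a different argument.
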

\begin{proof}
Three cases are distinguished 
\begin{itemize}
\item Case 1 :  $ \dim < p \leq +\infty$. The proof follows immediately from Lemma  \ref{lemma_inj1} and the imbedding
$ W^{0, \infty}_{\alpha+\dim/p-1}(\Rd) \inclus    W^{0, q}_\beta(\Rd)$ (which remains valid even if $q = +\infty)$). 
\item Case 2 :  $p < \dim$.   The left inequality in (\ref{cond_inclu})  means that $q \leq \ps$.  Combining  Lemma \ref{lemma_inj1} with
 \eqref{holder_weighted} gives 
$$
W^{1, p}_\alpha(\Rd)  \inclus W^{0, \ps}_\alpha(\Rd)  \inclus W^{0, q}_\beta(\Rd). 
$$
\item Case 3: $1 <  p = \dim$ and $q < +\infty$.  Let  $\eps  \in ]0, 1[$ picked so  that 
$$
 \frac{n}{q} + 1 > \frac{\dim}{n - \eps},  
$$
and set $p_\eps = p - \eps =  \dim - \eps < n$.   Let $\talpha$ be a real number chosen such that
$$
 \beta + \frac{\dim}{q} + 1 -  \frac{\dim}{p_\eps}  < \talpha  < \alpha + \frac{\dim}{p} -   \frac{\dim}{p_\eps}. 
$$ 
Under these constraints on $\eps$ and $\tilde{\alpha}$, we can apply \eqref{holder_weighted} and  \eqref{nouv_incl} , and we obtain the following inclusions: 
$$
W^{1, p}_\alpha(\Rd) \inclus W^{1, p_\eps}_{\talpha}(\Rd)\inclus W^{0, q}_{\beta}(\Rd). 
$$
\end{itemize}
\end{proof} 
\begin{corollary}
Let  $s > 1$,  $q >1$, $\alpha$ and $\beta$ be four reals and $m \geq 1$ be an integer. Assume that
\begin{equation}\label{cond_minclu}
\alpha  - \beta >  \frac{n}{q}-\frac{n}{s}+m  \geq  0.  
\end{equation}
Then, the following imbedding holds and is compact
\begin{equation}
W^{m, s}_\alpha(\Rd) \inclus W^{0, q}_\beta(\Rd).
\end{equation}
\end{corollary}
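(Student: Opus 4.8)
The plan is to treat the two assertions separately: first the continuous inclusion, then its compactness.

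For the continuous embedding, I would reduce the order $m$ to $0$ by iterating the first--order Sobolev injection of Lemma \ref{lemma_inj1}. Since $D^\nu u \in W^{m-|\nu|,s}_\alpha(\Rd)$ whenever $|\nu| \le m$ and $u \in W^{m,s}_\alpha(\Rd)$, applying Lemma \ref{lemma_inj1} to each such derivative shows, step by step, that $W^{m,s}_\alpha(\Rd) \inclus W^{m-1,s_1}_\alpha(\Rd) \inclus \cdots \inclus W^{0,\sigma}_\alpha(\Rd)$, where the integrability exponent is raised at each stage by the Sobolev rule $\frac{1}{s_{j+1}} = \frac{1}{s_j}-\frac{1}{\dim}$ (the $L^\infty$--endpoint and the $\eps$--perturbation of Case 3 of Corollary \ref{coro1_0} being used whenever a borderline $s_j=\dim$ is encountered). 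Reaching $W^{0,\sigma}_\alpha(\Rd)$ with $\frac{1}{\sigma}=\frac{1}{s}-\frac{m}{\dim}$, the weighted H\"older imbedding \eqref{holder_weighted} yields $W^{0,\sigma}_\alpha(\Rd)\inclus W^{0,q}_\beta(\Rd)$, since the hypothesis $\alpha-\beta > \frac{\dim}{q}-\frac{\dim}{s}+m = \frac{\dim}{q}-\frac{\dim}{\sigma}$ is exactly the gap required by \eqref{holder_weighted}. This gives the stated inclusion.

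For compactness I would use the classical local--compactness plus uniform--tail scheme. Let $(u_k)$ be bounded in $W^{m,s}_\alpha(\Rd)$, say $\|u_k\|_{W^{m,s}_\alpha(\Rd)}\le M$. On any fixed ball $\{|\x|<R\}$ the weight $\rhr$ is bounded above and below by positive constants, so the norm of $W^{m,s}_\alpha$ restricted there is equivalent to the unweighted $W^{m,s}$ norm and $W^{0,q}_\beta$ restricts to $L^q$. In the strictly subcritical regime $\frac{\dim}{q}-\frac{\dim}{s}+m>0$, the Rellich--Kondrachov theorem gives a compact embedding $W^{m,s}(\{|\x|<R\})\inclus\inclus L^q(\{|\x|<R\})$; extracting along $R=1,2,\dots$ and diagonalising produces a subsequence, still denoted $(u_k)$, converging in $L^q_{\loc}(\Rd)$ to a limit $u$. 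It remains to control the tail uniformly, and this is where the \emph{strict} weight gap is exploited: fixing $\beta'$ with $\beta < \beta' < \alpha-\big(\frac{\dim}{q}-\frac{\dim}{s}+m\big)$, the first step gives a uniform bound $\|u_k\|_{W^{0,q}_{\beta'}(\Rd)}\le cM$ and $u\in W^{0,q}_{\beta'}(\Rd)$ by Fatou. On $\{|\x|>R\}$ one has $\rhr^{(\beta-\beta')q}\le (R^2+1)^{(\beta-\beta')q/2}$ since $\beta-\beta'<0$, whence
$$
\int_{|\x|>R}\rhr^{\beta q}|u_k-u|^q\,dx \le (R^2+1)^{\frac{(\beta-\beta')q}{2}}\int_{\Rd}\rhr^{\beta' q}|u_k-u|^q\,dx \le (R^2+1)^{\frac{(\beta-\beta')q}{2}}(2cM)^q,
$$
which tends to $0$ as $R\to+\infty$, uniformly in $k$. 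Splitting $\|u_k-u\|^q_{W^{0,q}_\beta(\Rd)}$ into the region $\{|\x|<R\}$, where the local $L^q$ convergence applies, and the tail just estimated, we conclude $u_k\to u$ in $W^{0,q}_\beta(\Rd)$.

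The main obstacle is the borderline case $\frac{\dim}{q}-\frac{\dim}{s}+m=0$, in which $q$ coincides with the critical Sobolev exponent $\sigma$ and the local Rellich step genuinely fails: the unweighted critical embedding is not compact, and concentration at an interior point is not seen by the weight. The uniform tail estimate above is unaffected (it relies only on the strict margin $\alpha-\beta>0$), so the whole difficulty is concentrated in the local step. Here I would attempt to absorb the borderline into the available weight margin by the $\eps$--perturbation already employed in Corollary \ref{coro1_0}, replacing $(s,q)$ by slightly subcritical exponents so as to fall back on the strictly subcritical case while retaining the uniform control at infinity; making this reduction fully rigorous at the critical exponent is the delicate point of the argument.
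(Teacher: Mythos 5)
Your argument for the continuous inclusion is sound in substance but routed differently from the paper's. You perform all $m$ Sobolev steps first, at the fixed weight $\alpha$, and postpone a single weighted H\"older step \eqref{holder_weighted} to the end; the paper instead splits \emph{both} the integrability gap and the weight gap into $m$ equal pieces, $\eps_1 = \frac{1}{m}\left(\frac{n}{q}-\frac{n}{s}+m\right)$ and $\eps_2 = \frac{\alpha-\beta}{m}$, and applies Corollary \ref{coro1_0} once per derivative, so that each link $W^{k,s_k}_{\alpha_k}(\Rd) \inclus W^{k-1,s_{k-1}}_{\alpha_{k-1}}(\Rd)$ already mixes Sobolev and H\"older. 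The interleaving buys something your sketch glosses over: the paper's intermediate exponents $s_k = n/\theta_k$ vary monotonically between $q$ and $s$, so they stay in that range and every step is covered by Corollary \ref{coro1_0} (which handles all of $1<p\leq+\infty$, including $p=n$). Your iteration, by contrast, can leave the subcritical regime: under \eqref{cond_minclu} it is perfectly possible that $\frac{1}{s}-\frac{m}{n}\leq 0$ (e.g.\ $n=3$, $m=2$, $s=2$), in which case your terminal exponent $\sigma$ does not exist and, once some $s_j> n$, the only available injection is the $L^\infty$ endpoint of Lemma \ref{lemma_inj1}, which shifts the weight from $\alpha$ to $\alpha + n/s_j - 1$ and breaks the fixed-weight bookkeeping. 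This is repairable (the shifted weight still dominates the gap needed for the final H\"older step), but it must be written out; as it stands your reduction is only literally correct when $\frac{1}{s}-\frac{m}{n}>0$.

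On compactness the comparison inverts. The paper's proof establishes \emph{only} the chain of continuous inclusions and never addresses compactness at all, whereas your local Rellich--Kondrachov plus uniform-tail argument is complete and correct in the strict regime $\frac{n}{q}-\frac{n}{s}+m>0$: the choice $\beta<\beta'<\alpha-\left(\frac{n}{q}-\frac{n}{s}+m\right)$, the uniform bound in $W^{0,q}_{\beta'}(\Rd)$, and the estimate $\rhr^{(\beta-\beta')q}\leq (R^2+1)^{(\beta-\beta')q/2}$ on $\{|\x|>R\}$ are exactly the right mechanism. As for the borderline case $\frac{n}{q}-\frac{n}{s}+m=0$ that you flag as the delicate point: it is not delicate, it is false, and your own diagnosis (concentration is invisible to the weight) explains why no $\eps$-perturbation can rescue it. Indeed, take $\phi\in\D(\Rd)$, $\phi\not\equiv 0$, a point $\x_0$, and set $u_j(\x)=j^{n/q}\phi\left(j(\x-\x_0)\right)$. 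Since $\frac{n}{q}=\frac{n}{s}-m$, the quantities $\|D^\mu u_j\|_{L^s(\Rd)}$ for $|\mu|=m$ and $\|u_j\|_{L^q(\Rd)}$ are independent of $j$, the lower-order derivatives tend to $0$, and all supports lie in a fixed ball around $\x_0$ on which $\rhr$ is comparable to a constant; hence $(u_j)$ is bounded in $W^{m,s}_\alpha(\Rd)$ and converges to $0$ in $\Dp(\Rd)$, yet $\|u_j\|_{W^{0,q}_\beta(\Rd)}\geq c\,\|\phi\|_{L^q(\Rd)}>0$ for some $c>0$, so no subsequence can converge in $W^{0,q}_\beta(\Rd)$. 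Consequently the compactness assertion of the corollary is true only under the strict inequality $\frac{n}{q}-\frac{n}{s}+m>0$; your proof covers everything that is actually provable, and both the statement and the paper's proof (which silently asserts compactness without argument) need to be corrected on this point.
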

\begin{proof}
 Consider the finite sequences $(\theta_k)_{0 \leq k \leq m}$, $(s_k)_{0 \leq k \leq m}$ and $(\alpha_k)_{0 \leq k \leq m}$ defined as: 
$$
\theta_k = \frac{\dim}{q}+ k (1-\eps_1),  \; s_k = \frac{\dim}{\theta_k}, \; \alpha_k = \beta + k \eps_2, \; \mbox{ for } 0 \leq k \leq m. 
$$
with
$$
\eps_1 =\frac{1}{m}\left( \frac{\dim}{q}  - \frac{\dim}{s} + m\right), \; \eps_2 = \frac{\alpha-\beta}{m} > 0. 
$$
We have $\theta_m = \dim/s$,  $\alpha_m = \alpha$,  $\eps_2 >\eps_1 \geq  0$, thanks to condition (\ref{cond_minclu}).
We also have for $1 \leq k \leq m$ 
$$
0< \frac{\dim}{s_{k-1}} - \frac{\dim}{s_{k}} + 1 = \theta_{k-1} - \theta_k + 1 = \eps_1 < \alpha_k-\alpha_{k-1} = \eps_2. 
$$
We also have $1 < s_0 = q< ...<s_m = s$ or  $1 < s_m= s< ...<s_0 = q$.  Thus, 
$$
W^{m, s_m}_{\alpha_m}(\Rd) = W^{m, s}_{\alpha}(\Rd)    \inclus \cdots \inclus W^{k, s_k}_{\alpha_k}(\Rd) \inclus \cdots \inclus   W^{0, s_0}_{\alpha_0}(\Rd) = W^{0, q}_{\beta}(\Rd). 
$$
\end{proof}

\begin{lemma}\label{product_lemma}  
Let  $1 < p < +\infty$,  $v \in W^{1, p}_\alpha(\Rd)$ and  $w \in W^{0, p}_\alpha(\Rd)$.
\begin{itemize} 
\item If  $p > \dim$. Then, $v w \in W^{0, p}_{2\alpha + \dim/p-1}(\Rd)$.
 \item If  $\max(\frac{\dim}{2} , \frac{2 \dim}{\dim + 1}) < p < \dim$  then $v w  \in W^{0, s}_{2\alpha}(\Rd)$ where $s > 1$ is defined as $\frac{1}{s} = \frac{2}{p} - \frac{1}{\dim}$. 
 \end{itemize}
 In both the cases, if $\alpha > 2 - \dim/p$ then  $v w  \in W^{-1, p}_{\alpha}(\Rd)$ and 
 \begin{equation}
\|v w\|_{W^{-1, p}_\alpha(\Rd)} \lesssim \|v\|_{W^{1, p}_\alpha(\Rd)} \|w\|_{W^{0, p}_\alpha(\Rd)}.
\end{equation}
\end{lemma}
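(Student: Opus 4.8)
The plan is to prove the two integrability bullets by elementary Hölder-type arguments built on the embeddings already established, and then to reach $W^{-1,p}_\alpha(\Rd)$ by testing the product against functions $\varphi \in W^{1,p'}_{-\alpha}(\Rd)$, which is the natural predual. First I would treat the case $p > \dim$. By the second item of Lemma \ref{lemma_inj1}, $v \in W^{1,p}_\alpha(\Rd)$ embeds into $W^{0,\infty}_{\alpha+\dim/p-1}(\Rd)$, so $\rhr^{\alpha+\dim/p-1} v \in L^\infty(\Rd)$ with norm controlled by $\|v\|_{W^{1,p}_\alpha(\Rd)}$. Since $\rhr^\alpha w \in L^p(\Rd)$, multiplying gives $\rhr^{2\alpha+\dim/p-1} vw \in L^p(\Rd)$, i.e. $vw \in W^{0,p}_{2\alpha+\dim/p-1}(\Rd)$, together with the bound $\|vw\|_{W^{0,p}_{2\alpha+\dim/p-1}} \lesssim \|v\|_{W^{1,p}_\alpha}\|w\|_{W^{0,p}_\alpha}$.

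Next, for $\max(\dim/2, 2\dim/(\dim+1)) < p < \dim$, the first item of Lemma \ref{lemma_inj1} yields $v \in W^{0,\ps}_\alpha(\Rd)$ with $1/\ps = 1/p - 1/\dim$. Applying the weighted Hölder inequality of Lemma \ref{product_lemma0} to $v \in W^{0,\ps}_\alpha$ and $w \in W^{0,p}_\alpha$ produces $vw \in W^{0,s}_{2\alpha}(\Rd)$ with $1/s = 1/\ps + 1/p = 2/p - 1/\dim$. Here the constraint $p > 2\dim/(\dim+1)$ is exactly what guarantees $1/s < 1$, hence $s > 1$, so that Lemma \ref{product_lemma0} applies and the target is a genuine Lebesgue-type space; the norm estimate is inherited from that lemma.

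To obtain the membership in $W^{-1,p}_\alpha(\Rd)$ I would identify $vw$ with the functional $\varphi \mapsto \int_{\Rd} vw\,\varphi$ on $W^{1,p'}_{-\alpha}(\Rd)$ (this coincides with the distributional pairing, $vw$ being locally integrable) and estimate it by Hölder after inserting the weight. When $p > \dim$ I split $\int_{\Rd} vw\,\varphi = \int_{\Rd} (\rhr^{2\alpha+\dim/p-1}vw)(\rhr^{-(2\alpha+\dim/p-1)}\varphi)$ and use Hölder with exponents $p,p'$; it then suffices to dominate $\|\varphi\|_{W^{0,p'}_{-(2\alpha+\dim/p-1)}}$ by $\|\varphi\|_{W^{1,p'}_{-\alpha}}$, which follows from the plain inclusions $W^{1,p'}_{-\alpha} \subset W^{0,p'}_{-\alpha-1} \subset W^{0,p'}_{-2\alpha-\dim/p+1}$, the last one holding precisely when $\alpha \geq 2 - \dim/p$. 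When $p < \dim$ the same splitting with the weight $\rhr^{2\alpha}$ and Hölder with exponents $s,s'$ reduces matters to the embedding $W^{1,p'}_{-\alpha} \inclus W^{0,s'}_{-2\alpha}$.

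The main point requiring care is this last embedding, which I would read off from Corollary \ref{coro1_0}: setting its parameters to $(p',\,s',\,-\alpha,\,-2\alpha)$, one computes $\dim/s' - \dim/p' + 1 = 2 - \dim/p$, so the hypothesis of Corollary \ref{coro1_0} becomes $0 \leq 2 - \dim/p < \alpha$. The right inequality is the assumed $\alpha > 2 - \dim/p$, while the left inequality $2 - \dim/p \geq 0$ is exactly the requirement $p \geq \dim/2$; this is why the lower bound $p > \dim/2$ is imposed in the hypotheses, and it is the step most easily overlooked. Chaining the three bounds — the negative-order estimate, the intermediate product bound, and the $W^1$-control of $v$ from Lemma \ref{lemma_inj1} — then yields $\|vw\|_{W^{-1,p}_\alpha(\Rd)} \lesssim \|v\|_{W^{1,p}_\alpha(\Rd)}\|w\|_{W^{0,p}_\alpha(\Rd)}$, as claimed.
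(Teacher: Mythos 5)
Your proof is correct and follows essentially the same route as the paper's: both cases rest on Lemma \ref{lemma_inj1} to upgrade $v$ (to $W^{0,\infty}_{\alpha+\dim/p-1}$ or $W^{0,\ps}_\alpha$), the weighted H\"older inequality of Lemma \ref{product_lemma0} for the product, and Corollary \ref{coro1_0} together with the condition $\alpha > 2-\dim/p$ for the passage to $W^{-1,p}_\alpha(\Rd)$. The only difference is presentational — you phrase the negative-order step through the duality pairing against $W^{1,p'}_{-\alpha}(\Rd)$, whereas the paper states the equivalent primal inclusions (e.g. $W^{0,p}_{2\alpha+\dim/p-1} \inclus W^{0,p}_{\alpha+1} \inclus W^{-1,p}_{\alpha}$) — and your exponent arithmetic, including the identity $\dim/s'-\dim/p'+1 = 2-\dim/p$ and the role of the lower bounds on $p$, matches the paper's.
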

\begin{proof}
\begin{itemize}
\item If $p > \dim$, then $v \in  W^{0, \infty}_{\lambda}(\Rd)$ with $\lambda=  \alpha + \dim/p-1$, thanks to Lemma  \ref{lemma_inj1}. Thus, in view of Lemma \ref{product_lemma0}
we have $ v w \in  W^{0, p}_{2\alpha + \dim/p-1}(\Rd)$.  If $\alpha > 2 - \dim/p$, $W^{0, p}_{2\alpha + \dim/p-1}(\Rd) \inclus W^{0, p}_{\alpha+1}(\Rd) \inclus W^{-1, p}_{\alpha}(\Rd)$. 
\item If   $\dim/2 < p < \dim$, then from Lemma \ref{lemma_inj1} we know that  $v \in W^{0, \ps}_\alpha(\Rd)$. 
By H\"older inequality,  $vw \in W^{0, s}_{2\alpha}(\Rd)$ where  
$$
\frac{1}{s} = \frac{1}{p} + \frac{1}{\ps} =  \frac{2}{p} - \frac{1}{\dim}.
$$
If  $\alpha > 2 - \dim/p$, then   $W^{1, p'}_{-\alpha}(\Rd) \inclus W^{0, s'}_{-2\alpha}(\Rd)$, thanks to Corollary \ref{coro1_0}. Hence, 
$W^{0, s}_{2\alpha}(\Rd) \inclus W^{-1, p}_{\alpha}(\Rd)$. This ends the proof. 
\end{itemize}
\end{proof}
\subsection{Existence, uniqueness and regularity of weak solutions}\label{proof_mr_sec}
 Throughout the remainder of this work, we will restrict ourselves to the three-dimensional case, which is the only one relevant here. Accordingly, 
 we will assume from now on that $n= 3$. Hereafter we focus our intention of the proof of theorem \ref{main_result1}. It will be achieved by means of 
a fixed point argument. We need the following result concerning the linearized equations  
 (see Abada et {\it al.} \cite{abada_boulmezaoud}): 
\begin{equation}\label{linearNS}
\begin{array}{rcl}
-   \Delta \vv + \Rw \vv + \grad r  &=& \h \mbox{ in } \Rdt, \\
\div \vv &=& 0 \mbox{ in } \Rdt. 
\end{array}
\end{equation}
\begin{proposition}[\cite{abada_boulmezaoud}] \label{proposition3}
 Let  $p > 1$ be a real such that $ p \not \in \{\frac{3}{2}, 3\}$ and  $\ell$ an integer satisfying
 $$
\ell =   \left[\frac{\dimt}{p'}\right].  
  $$
  Let $\h \in W^{-1, p}_\ell(\Rdt)^\dimt$   such that 
\begin{equation}\label{condition_compa32}
  \< h_i, 1\> = 0 \mbox{ for } 1 \leq i \leq 3. 
\end{equation}
Then, the system  (\ref{linearNS}) has a unique solution $(\vv, r) \in V^{1,p}_{\ell}(\Rdt)^\dimt \times W^{0,p}_{\ell}(\Rdt)$ and 
\begin{equation}
 \|\vv \|_{W^{1, p}_\ell(\Rdt)^\dimt}  + \| \Rw \vv \|_{W^{-1, p}_\ell(\Rdt)^\dimt }  +    \|r\|_{W^{0, p}_\ell(\Rdt)} 
\lesssim \|\h\|_{W^{-1, p}_\ell(\Rdt)^\dimt}. 
\end{equation} 
Moreover, if $\h \in W^{0, p}_{\ell+1}(\Rdt)^\dimt$ 
then $(\vv, r) \in V^{2,p}_{\ell+1}(\Rdt)^\dimt \times W^{1,p}_{\ell+1}(\Rdt)$. 
\end{proposition}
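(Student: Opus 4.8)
The plan is to prove Proposition~\ref{proposition3} as an isomorphism theorem for the linear map $(\vv,r)\mapsto(-\Delta\vv+\Rw\vv+\grad r,\div\vv)$ between $\VV^{1,p}_\ell(\Rdt)^\dimt\times W^{0,p}_\ell(\Rdt)$ and the subspace of $W^{-1,p}_\ell(\Rdt)^\dimt$ cut out by the compatibility conditions \eqref{condition_compa32}. The baseline ingredient is the already available isomorphism theorem for the \emph{pure Stokes system} $-\Delta\vv+\grad r=\h$, $\div\vv=0$ in the weighted scale on $\Rdt$ (Cattabriga, Girault, Alliot, Boulmezaoud, cited in the introduction): for $p\notin\{3/2,3\}$ and the critical weight $\ell=[\dimt/p']$ it yields, under $\langle h_i,1\rangle=0$, a unique solution with $\|\vv\|_{W^{1,p}_\ell}+\|r\|_{W^{0,p}_\ell}\lesssim\|\h\|_{W^{-1,p}_\ell}$. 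First I would eliminate the pressure through the weighted Helmholtz/Leray decomposition available in this scale, $W^{-1,p}_\ell(\Rdt)^\dimt=\P(W^{-1,p}_\ell)\oplus\grad(W^{0,p}_\ell)$, reducing the problem to $A\vv+\P\Rw\vv=\P\h$ on the solenoidal subspace, with $A=-\P\Delta$ the weighted Stokes operator, the pressure being recovered afterwards from $\grad r=(I-\P)(\h-\Rw\vv)$.

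The structural observation that makes the unbounded first-order coefficient $(\uom\wedge\x)\cdot\grad$ tractable is that $\Rw$ is the infinitesimal generator of the rotation group. Writing $O(t)=\exp(t\,\uom\wedge\cdot)$ for the rotation of angle $t$ about $\uom$ and $G(t)\vv(\x)=O(-t)\,\vv(O(t)\x)$, a direct computation gives $\left.\frac{d}{dt}\right|_{t=0}G(t)\vv=\Rw\vv$. Since rotations preserve the Lebesgue measure, the weight $\rhr=(|\x|^2+1)^{1/2}$, and the Euclidean length of vectors, $G(t)$ is a $C_0$-group of \emph{isometries} on each $W^{m,p}_k(\Rdt)^\dimt$; moreover $A=-\P\Delta$ is rotation-equivariant and hence commutes with $G(t)$. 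With the orientation $T(t):=G(-t)\,e^{-tA}$ one gets a $C_0$-semigroup whose generator on the solenoidal subspace is $-(A+\P\Rw)$. I would then construct the solution by the resolvent-at-zero representation $\vv=\int_0^{+\infty}T(t)\,\P\h\,dt=\int_0^{+\infty}G(-t)\,e^{-tA}\,\P\h\,dt$. Because $G(-t)$ is an isometry, the convergence of this integral and the whole weighted estimate reduce to the decay estimates for the pure Stokes semigroup $e^{-tA}$, which are exactly the content of the known weighted Stokes theory. This gives existence together with the bound $\|\vv\|_{W^{1,p}_\ell}+\|r\|_{W^{0,p}_\ell}\lesssim\|\h\|_{W^{-1,p}_\ell}$, after which $\|\Rw\vv\|_{W^{-1,p}_\ell}$ is controlled by reading $\Rw\vv=\h+\Delta\vv-\grad r$ off the equation.

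It remains to pin down the compatibility conditions and uniqueness. The condition $\langle h_i,1\rangle=0$ is precisely orthogonality to the one-dimensional cokernel generated by the constants: testing the weak formulation with the constant fields $\e_i$, which do belong to $W^{1,p'}_{-\ell}(\Rdt)$ at this weight, forces \eqref{condition_compa32}, and these same constants span the obstruction already carried by the Helmholtz/Stokes machinery on $\Rdt$ at the integer-critical weight $\ell=[\dimt/p']$; this is exactly why $p=3/2$ and $p=3$, where $\dimt/p'$ is an integer, must be excluded. Uniqueness I would obtain by showing that a homogeneous solution in $\VV^{1,p}_\ell(\Rdt)^\dimt\times W^{0,p}_\ell$ must vanish: taking the divergence shows $r$ is harmonic in the relevant weighted class and hence $0$, whereupon $\vv$ solves the homogeneous rotation--Stokes problem and is killed by the injectivity half of the isomorphism (equivalently, the representation integral applied to $\h=0$ returns $\vv=0$). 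The main obstacle throughout is precisely this unbounded coefficient in $\Rw$: it is neither a bounded nor a relatively compact perturbation of $-\Delta$ in these weighted norms, so no naive perturbation or Fredholm argument is available, and the entire proof hinges on replacing perturbation theory by the exact group structure, which trades the unbounded drift for the harmless isometry $G(-t)$ and reduces every estimate to the established weighted Stokes bounds.

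Finally, for the regularity statement I would feed the same representation with the better data. If $\h\in W^{0,p}_{\ell+1}(\Rdt)^\dimt$, then $\P\h$ lies in the corresponding solenoidal weighted class, and the $W^{0,p}\!\to\!W^{2,p}$ smoothing and decay estimates for $e^{-tA}$ (again composed with the isometries $G(-t)$) place $\vv=\int_0^{+\infty}G(-t)\,e^{-tA}\,\P\h\,dt$ in $W^{2,p}_{\ell+1}(\Rdt)^\dimt$ and $r$ in $W^{1,p}_{\ell+1}(\Rdt)$. The remaining membership $\Rw\vv\in W^{0,p}_{\ell+1}$, which is exactly what $\VV^{2,p}_{\ell+1}$ requires, is then read directly off the equation: since $\h$, $\Delta\vv$ and $\grad r$ all lie in $W^{0,p}_{\ell+1}(\Rdt)^\dimt$ once $\vv\in W^{2,p}_{\ell+1}$ and $r\in W^{1,p}_{\ell+1}$, the identity $\Rw\vv=\h+\Delta\vv-\grad r$ closes the regularity bootstrap and completes the proof.
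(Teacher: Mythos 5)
You should first note that this paper contains no proof of Proposition~\ref{proposition3} at all: it is imported verbatim from \cite{abada_boulmezaoud}, whose analysis is carried out in the stationary weighted-Sobolev framework. Your route via the conjugated semigroup $T(t)=G(-t)e^{-tA}$ (the Hishida / Geissert--Heck--Hieber representation for the rotating Stokes operator) is therefore a genuinely different strategy, and its skeleton is sound: the generator computation $\frac{d}{dt}G(t)\vv|_{t=0}=\Rw\vv$, the commutation of rotations about $\uom$ with $-\P\Delta$, and the recovery of the pressure from $\grad r=(I-\P)(\h-\Rw\vv)$ are all correct, as is tying the exclusion $p\notin\{3/2,3\}$ to $3/p'\in\Z$. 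But the proof has a genuine gap at its center: the convergence of $\vv=\int_0^{+\infty}G(-t)e^{-tA}\P\h\,dt$ in $W^{1,p}_\ell(\Rdt)^3$. Since $G(-t)$ is (uniformly) norm-preserving, bounding the integrand by $\|e^{-tA}\P\h\|$ can only ever give \emph{absolute} convergence, and that is precisely what fails at the critical weight $\ell=[3/p']$. The ``known weighted Stokes theory'' you invoke (Cattabriga, Alliot, Boulmezaoud, etc.) consists of elliptic isomorphism theorems and contains no time-decay estimates for $e^{-tA}$ on the scale $W^{m,p}_k$; indeed the heat semigroup does not decay in these inhomogeneous weighted norms (mass spreads to $|\x|\sim\sqrt{t}$, where $\rhr^{\ell}$ is large), and if the integral converged absolutely you would have solved the stationary problem while discarding the rotation entirely, contradicting the known anisotropic structure of the rotating fundamental solution (Farwig--Hishida), where the oscillation injected by $G(-t)$ is essential and convergence is at best conditional. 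The compatibility condition \eqref{condition_compa32} must enter quantitatively, as a moment-cancellation gain in the decay of $e^{-tA}\P\h$, and you never establish this; the boundedness of $\P$ on $W^{-1,p}_\ell(\Rdt)^3$ also needs justification (it holds because $\ell<3/p'$ strictly, making $\rhr^{\ell p}$ a Muckenhoupt weight, but it has to be said).

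The second gap is uniqueness. Constructing a right inverse by the representation integral does not give injectivity, and ``the integral applied to $\h=\zero$ returns $\vv=\zero$'' is vacuous. What is needed is a Liouville-type theorem: if $(\vv,r)\in V^{1,p}_{\ell}(\Rdt)^3\times W^{0,p}_\ell(\Rdt)$ solves the homogeneous system, then $\vv=\zero$. Your pressure step does work --- since $\div(\Rw\vv)=(\uom\wedge\x)\cdot\grad(\div\vv)=0$, taking the divergence gives $\Delta r=0$, and a harmonic function in $W^{0,p}_\ell$ with $\ell\geq 1$ vanishes --- but the remaining claim, that $-\Delta\vv+\Rw\vv=\zero$ forces $\vv=\zero$ in this weighted class, is the genuinely hard half (unbounded drift, with $0$ in the essential spectrum of the rotating Stokes operator) and is exactly the kind of statement \cite{abada_boulmezaoud} has to prove; it cannot be obtained by appealing to ``the injectivity half of the isomorphism'' that you are in the middle of constructing. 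Your final regularity bootstrap, reading $\Rw\vv=\h+\Delta\vv-\grad r$ off the equation once $\vv\in W^{2,p}_{\ell+1}$ and $r\in W^{1,p}_{\ell+1}$, is fine, but it inherits the unproved smoothing estimates for $e^{-tA}$ in the weighted scale from the first gap.
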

Observe now that $k = [\dimt/p']$ satisfies the following inequaliy
$$
k > 2 - \frac{3}{p} = 2 - \frac{3}{p} = \frac{3}{p'} - 1, 
$$
since $p' \not \in  \{3/2, 3\}$.  \\
We are now in position to prove Theorem \ref{main_result1}. We start with existence and uniqueness. 
\begin{itemize}
\item {\it  Existence and uniquness}. 
Consider the spaces
$$
\begin{array}{rcl}
V &=& \{ \h \in W^{-1, p}_{k}(\Rdt)^\dimt  \bve    \< h_i, 1\> = 0 \; \mbox{ for } 1 \leq i \leq 3\}, \\
H &=& \{ \vv \in W^{1, p}_{k}(\Rdt)^\dimt \bve \div \vv = 0 \}. 
\end{array}
$$
According to Proposition  \ref{proposition3}, given a function $\h \in V$, there exists one and only one  pair $(\vv, r) \in   H \times W^{0, p}_k(\Rdt)$ solution of 
(\ref{linearNS}). 
Thus, one can  define  the linear  operator 
$$
\begin{array}{rcrl}
T\; &:&\; V &\rightarrow H, \\
&& \h& \mapsto \vv \mbox{ with $(\vv, r)$  the unique solution  of ( \ref{linearNS}) in which $\h$ is the RHS. } 
\end{array}
$$
There exists a constant $c_1 > 0$ such that
\begin{equation}\label{contT_inequa1}
\forall \h \in V, \; \|T(\h)\|_{W^{1, p}_{k}(\Rdt)}  \leq c_1  \|\h\|_{W^{-1,  p}_{k}(\Rdt)}. 
\end{equation}
On the other hand, given  $\vv, \; \z \in H$, one has $\vv. \grad \z \in W^{-1, p}_{k}(\Rdt)^3$, thanks to  Lemma \ref{product_lemma}, and 
\begin{equation}\label{contProd_inequa1}
\forall \vv \in H, \; \|\vv.\grad \z\|_{W^{-1, p}_{k}(\Rdt)}  \leq c_2\|\vv\|_{W^{1,  p}_{k}(\Rdt)} .   \|\z\|_{W^{1,  p}_{k}(\Rdt)}, 
\end{equation}
for some constant $c_2 > 0$ not depending on $\vv$ nor on $\z$.  Moreover, one has
$$
\int_{\Rdt} \vv.\grad z_i dx = - \int_{\Rdt} (\div  \vv ) z _i dx =  0. 
$$
In other terms, $\vv . \grad \z \in V$ when $\vv \in H$ and $\z \in H$. \\
Consider now the map $K \; :\; H \fleche H$ defined as
$$
\forall \vv \in H, \; K \vv = T(\f - \vv.\grad\vv). 
$$
The problem (\ref{prob_opT}) consists to find a fixed point of the operator $K$.  
\begin{lemma}\label{estimatesK}
The following estimate hold for all $\vv, \; \w \in H$
$$    
\begin{array}{rcl}
\; \|K(\vv)\|_{W^{1, p}_{k}(\Rdt)^\dimt} &\leq& c_1 \dps{  \{ \|\f\|_{W^{-1, p}_{k}(\Rdt)^\dimt} + c_2 \|\vv\|^2_{W^{1, p}_{k}(\Rdt)^\dimt} \}},   \\
\; \| K(\vv) - K(\w)\|_{ W^{1, p}_k(\Rdt)^\dimt  } & \leq & c_1 c_2 (\|\vv\|_{W^{1, p}_{k}(\Rdt)^\dimt}+ \|\w\|_{W^{1, p}_{k}(\Rdt)^\dimt} ) \|\w-\vv\|_{W^{1, p}_{k}(\Rdt)},
\end{array}        
$$
where $c_1 > 0$ and $c_2 > 0$ are the  constants in (\ref{contT_inequa1}) and (\ref{contProd_inequa1}). 
\end{lemma}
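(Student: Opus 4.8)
The plan is to obtain both estimates purely from the boundedness of the linear operator $T$ (inequality (\ref{contT_inequa1})) combined with the bilinear product bound (\ref{contProd_inequa1}) furnished by Lemma \ref{product_lemma}. The only nonlinear ingredient is the quadratic term $\vv.\grad\vv$, so the whole argument reduces to controlling this term, and the difference of two such terms, in $W^{-1, p}_{k}(\Rdt)$. Since $K$ has already been shown to map $H$ into $H$, the quantities fed into $T$ automatically belong to $V$, and no further well-posedness check is needed.

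For the first estimate I would simply unwind the definition $K(\vv) = T(\f - \vv.\grad\vv)$, apply (\ref{contT_inequa1}), and split the right-hand side with the triangle inequality:
$$
\|K(\vv)\|_{W^{1, p}_{k}(\Rdt)} \leq c_1 \|\f - \vv.\grad\vv\|_{W^{-1, p}_{k}(\Rdt)} \leq c_1\left(\|\f\|_{W^{-1, p}_{k}(\Rdt)} + \|\vv.\grad\vv\|_{W^{-1, p}_{k}(\Rdt)}\right).
$$
Estimate (\ref{contProd_inequa1}) with $\z = \vv$ bounds the last term by $c_2 \|\vv\|^2_{W^{1, p}_{k}(\Rdt)}$, which gives the first inequality.

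The second (Lipschitz-type) estimate is where the one genuinely delicate step occurs, namely the linearization of the difference of the two quadratic terms. Using the linearity of $T$ one has $K(\vv) - K(\w) = T(\w.\grad\w - \vv.\grad\vv)$, and I would rewrite the argument by adding and subtracting a mixed term:
$$
\w.\grad\w - \vv.\grad\vv = \w.\grad(\w - \vv) + (\w - \vv).\grad\vv.
$$
Because $H$ is a linear space, $\w - \vv \in H$, so each summand is again a product of an element of $H$ with the gradient of an element of $H$, and (\ref{contProd_inequa1}) applies to each piece. Combining this with (\ref{contT_inequa1}) yields
$$
\|K(\vv) - K(\w)\|_{W^{1, p}_{k}(\Rdt)} \leq c_1 c_2 \left(\|\w\|_{W^{1, p}_{k}(\Rdt)} + \|\vv\|_{W^{1, p}_{k}(\Rdt)}\right)\|\w - \vv\|_{W^{1, p}_{k}(\Rdt)},
$$
which is the desired bound.

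Apart from this bilinear splitting the computation is entirely routine, so I do not anticipate any real obstacle; the content of the lemma is essentially a bookkeeping exercise built on the two estimates (\ref{contT_inequa1}) and (\ref{contProd_inequa1}) already established. The only point to be attentive to is that the mixed-term decomposition must exploit $\w - \vv \in H$ (not merely $\vv, \w \in H$ separately), which is precisely what makes (\ref{contProd_inequa1}) directly applicable to each of the two summands.
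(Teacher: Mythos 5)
Your proof is correct and follows essentially the same route as the paper: both estimates are obtained from the boundedness of $T$ in (\ref{contT_inequa1}) combined with the bilinear bound (\ref{contProd_inequa1}), using linearity of $T$ and the fact that the products of elements of $H$ stay in $V$. The only difference is cosmetic: for the difference of quadratic terms you use the asymmetric telescoping $\w.\grad\w - \vv.\grad\vv = \w.\grad(\w-\vv) + (\w-\vv).\grad\vv$, whereas the paper polarizes, setting $\vphi = \frac{1}{2}(\vv+\w)$ and $\vtheta = \frac{1}{2}(\w-\vv)$ and writing the difference as $2(\vphi.\grad\vtheta + \vtheta.\grad\vphi)$; both splittings remain inside $H$ and yield the identical final bound $c_1 c_2\,(\|\vv\|_{W^{1,p}_k(\Rdt)^\dimt}+\|\w\|_{W^{1,p}_k(\Rdt)^\dimt})\,\|\w-\vv\|_{W^{1,p}_k(\Rdt)^\dimt}$.
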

\begin{proof}
The first inequality follows from (\ref{contT_inequa1})  and (\ref{contProd_inequa1}). 
Let   $\u_1 = T(\vv), \; \u_2 = T(\w)$, and $\pi_1$, $\pi_2$ the corresponding pressures. The pair
$(\z, e) = (\u_2-\u_1, \pi_2 - \pi_1) \in H \times W^{0, p}_k(\Rdt)$ satisfies 
\begin{equation}
-\Delta \z -(\uom \times \x).\nabla \z +\uom\times \z +\nabla e = \vv \grad \vv -\w \grad \w. 
\end{equation}
Thus,  
$$
\begin{array}{rcl}
\|\z\|_{W^{1, p}_k(\Rdt)}  &\leq& \dps{  c_1 \|\w . \nabla \w-\vv . \nabla \vv\|_{W^{-1,p}_k(\Rdt)^\dimt} } \\
&\leq& \dps{  2 c_1 \|\vphi . \nabla \vtheta + \vtheta . \grad \vphi\|_{W^{-1,p}_k(\Rdt)^\dimt} } \\
\end{array}
$$
where we set
$$
\vphi = \frac{1}{2} (\vv + \w), \; \vtheta = \frac{1}{2}(\w-\vv). 
$$
It follows that 
$$
\begin{array}{rcl}
\|\z\|_{W^{1, p}_k(\Rdt)^\dimt}&\leq & \dps{  2 c_1\{ \| \vphi.\grad \vtheta\|_{W^{-1, p}_k(\Rdt)^\dimt} +  \| \vtheta.\grad \vphi\|_{W^{-1,p}_k(\Rdt)^\dimt} \} } \\
&\leq & \dps{ 4 c_1 c_2 \| \vphi\|_{W^{1, p}_k(\Rdt)^\dimt}   \| \vtheta\|_{W^{1, p}_k(\Rdt)^\dimt}}.
\end{array}
$$
This ends the proof of Lemma \ref{estimatesK}. 
\end{proof}
We pursue the proof of Theorem \ref{main_result1}. Assume
$$
\|\f\|_{W^{-1, p}_{k}(\Rdt)} < \frac{1}{4 c_1^2 c_2},
$$
and consider  the set
$$
\Ball = \{ \vv \in H \bve \|\vv\|_{W^{1, p}_k(\Rdt)^\dimt} \leq \rho \}, 
$$
where
$$
\rho = 2 c_1 \|\f\|_{W^{-1, p}_{k}(\Rdt)}.
$$
Then, for all $\vv \in \Ball$ 
$$
\begin{array}{rcl}
\|K \vv\|_{W^{1, p}_{k}(\Rdt)} & \leq&  \dps{ c_1  \|\f\|_{W^{-1, p}_{k}(\Rdt)} + c_1 c_2 \rho^2 }  \\
&\leq & \dps{ c_1  \|\f\|_{W^{-1, p}_{k}(\Rdt)} (1 + 4c^2_1 c_2  \|\f\|_{W^{-1, p}_{k} (\Rdt)} ) }  \\
&\leq & \rho. 
\end{array}
$$
Further, for all $\vv, \, \w \in \Ball$
$$
 \| K(\vv) - K(\w)\|_{W^{1, p}_\alpha(\Rdt)^\dimt  }  \leq 2 c_1 c_2  \rho  \|\w-\vv\|_{W^{1, p}_{k}(\Rdt)^\dimt}.
$$
It follows that $K$ is contractive since
$$
2 c_1 c_2 \rho = 4 c^2_1 c_2 \|\f\|_{W^{-1, p}_{k}(\Rdt)} < 1. 
$$
Applying Banach fixed point theorem, one deduces that there exists a unique pair $(\u, \pi) \in H \times W^{0, p}_k(\Rdt)$ solution of
(\ref{prob_opT}) and satisfying the estimate
$$
\|\u\|_{W^{1, p}_k(\Rdt)^\dimt} \leq  2 c_1 \|\f\|_{W^{-1, p}_{k}(\Rdt)}, \; \|\pi\|_{W^{0, p}_k(\Rdt)^\dimt} \leq c_3 \|\f\|_{W^{-1, p}_{k}(\Rdt)}. 
$$ 
\item {\it Regularity}.  Assume that  $\f \in W^{0, p}_{k+1}(\Rdt)$. Two cases are distinguished.
\begin{itemize}
 \item {\it Case 1: $p > \dimt$}.   Then, $k=2$ and  $\u.\grad \u \in  W^{0, p}_{3+3/p}(\Rdt)^\dimt \inclus W^{0, p}_{3}(\Rdt)^\dimt = W^{0, p}_{k+1}(\Rdt)^\dimt$, thanks to Lemma \ref{product_lemma},  and  $\f - \u.\grad\u \in W^{0, p}_{k+1}(\Rdt)^3$. 
 From Proposition \ref{proposition3}, we  conclude that  $\u = T(\f-\u.\grad \u) \in  W^{2, p}_{k+1}(\Rdt)^3$. \\
  \item {\it Case 2: $ \dimt/2  < p < \dimt$}. Then,  $k=[3/p']=1$.  According to  Lemma \ref{product_lemma}, we know that $\u.\grad \u \in W^{0, s}_{2k}(\Rdt)^3$ with 
  $$
  \frac{1}{s} = \frac{2}{p} - \frac{1}{\dimt}
  $$
Define $r > 1$ by 
$$
\frac{2}{r} = \frac{1}{\dimt} +  \frac{1}{p}. 
$$
Then $p < r < \dimt$ and $W^{1, r'}_{-1}(\Rdt) \inclus  W^{0,p'}_{-k-1}(\Rdt)  \cap  W^{0, s'}_{-2k}(\Rdt) = W^{0,p'}_{-2}(\Rdt)  \cap  W^{0, s'}_{-2}(\Rdt)$, thanks to 
Lemma \ref{coro1_0}. Thus, $W^{1,p}_{k+1}(\Rdt)  \inclus W^{-1, r}_{1}(\Rdt) $ and \\ $W^{0, s}_{2k}(\Rdt) \inclus W^{-1, r}_{1}(\Rdt)$. Hence,  $\f - \u.\grad \u \in  W^{-1,r}_{1}(\Rdt)^\dimt$. From  Proposition \ref{proposition3} (see also \cite{abada_boulmezaoud}) we know that  $\u =  T(\f-\u.\grad \u) \in  W^{1, r}_{1}(\Rdt)^3$. By Lemma \ref{product_lemma} again, we deduce that $u.\grad \u \in W^{0, p}_{2}(\Rdt)^\dimt $. Thus,    $\f - \u.\grad \u \in  W^{0,p}_{2}(\Rdt)^\dimt$ and, 
  in view of  Proposition \ref{proposition3},  $\u =  T(\f-\u.\grad \u) \in  W^{2, p}_{k+1}(\Rdt)^\dimt $. 
\end{itemize}
\end{itemize}
\section{Conclusion}
The use of weighted Sobolev spaces to describe the behavior of the velocity and pressure in the equations governing stationary 
fluid flows has once again proved to be a fruitful approach, despite the nonlinear nature of the equations studied here. 
The presence of terms arising from the rotation of the body significantly complicates the analysis of the system, both
 in the linear and nonlinear settings. Nevertheless, by combining the results obtained in the linear case with a fixed-point
  argument, we are able to successfully complete the existence theory for the full problem. Moreover, this approach provides 
  a fairly precise description of the asymptotic decay at infinity of the solutions.

$\;$\\
{\bf Declarations}. \\
{\it Conflict of interest}: The author declares no competing interests.

\bibliographystyle{plain} 
\bibliography{biblio_bkk_2025}

\end{document}